\documentclass[onefignum,onetabnum]{siamonline250211}

\usepackage{amssymb}
\usepackage{mathtools}
\usepackage{bm}
\usepackage{graphicx}
\usepackage{xcolor}
\usepackage{siunitx}
\usepackage{booktabs}
\usepackage{enumitem}

\newcommand{\N}[2]{\mathcal{N}(#1, #2)}
\newcommand{\prior}{\pi_{\mathrm{prior}}}
\newcommand{\priorm}{\pi_{\mathrm{prior}}^{m}}
\newcommand{\priors}{\pi_{\mathrm{prior}}^{s}}

\newcommand{\post}{\pi_{\mathrm{post}}^{\yobs}}
\newcommand{\E}[1]{\mathbb{E}\!\left[ #1 \right]}
\newcommand{\Ex}[2]{\mathbb{E}_{#1}\!\left[ #2 \right]}

\newcommand{\norm}[1]{\| #1 \|}
\newcommand{\Rnorm}[2]{\| #1 \|_{#2}}

\newcommand{\Signoise}{\bm{\Sigma}_{\mathrm{noise}}}
\newcommand{\yobs}{\bm{y}}
\newcommand{\Fwhite}{F}
\newcommand{\forig}{f}
\newcommand{\G}{\mathbf{G}}

\newcommand{\Fopt}{\tilde{\Fwhite}^*}
\newcommand{\FoptY}{\Fwhite_{\Py}^*}
\newcommand{\Gopt}{\G^*}
\newcommand{\sopt}{\bvecs^*}

\newcommand{\Bproj}{\mathcal{B}_{\mathrm{ridge}}}
\newcommand{\Aaff}{\mathcal{A}_{\mathrm{aff}}}

\newcommand{\grad}[1]{\nabla_{#1}}
\newcommand{\Hss}{\mathbf{H}_{ss}}

\newcommand{\btheta}{\bm{\theta}}
\newcommand{\bvecm}{\bm{m}}
\newcommand{\bvecs}{\bm{s}}
\newcommand{\bmu}{\bm{\mu}}
\newcommand{\balpha}{\bm{\alpha}}
\newcommand{\dobs}{\bm{d}_{\mathrm{obs}}}
\newcommand{\bars}{\bar{\bvecs}}
\newcommand{\barm}{\bar{\bvecm}}

\newcommand{\Phiy}{\Phi_{y}}
\newcommand{\Phim}{\Phi_{m}}
\newcommand{\Phis}{\Phi_{s}}

\newcommand{\Cy}{\bm{C}_{\bm{y}}}
\newcommand{\Qproj}{\hat{\mathbf{Q}}}
\newcommand{\Qsurr}{\tilde{\mathbf{Q}}}
\newcommand{\qproj}{\hat{\bm{q}}}
\newcommand{\qsurr}{\tilde{\bm{q}}}

\newcommand{\Gm}{\G(\bvecm)}

\DeclareMathOperator{\tr}{tr}

\DeclareMathOperator{\diag}{diag}

\newcommand{\Rs}{\bm{R}_s}
\newcommand{\Rm}{\bm{R}_m}
\newcommand{\Py}{\mathcal{P}_y}
\newcommand{\Ptheta}{\mathcal{P}_\theta}
\newcommand{\Pm}{\mathcal{P}_m}
\newcommand{\Ps}{\mathcal{P}_s}

\newcommand{\Robs}{\mathcal{R}_{\mathrm{obs}}}
\newcommand{\Rsrcaff}{\mathcal{R}_{\mathrm{src,aff}}}
\newcommand{\Cmedaff}{\mathcal{C}_{\mathrm{med,aff}}}
\newcommand{\Tmedaff}{\mathcal{T}_{\mathrm{med,aff}}}
\newcommand{\Hmmaff}{\mathbf{H}_{mm}^{\mathrm{aff}}}
\newcommand{\Rsurraff}{\mathcal{R}_{\mathrm{surr,aff}}}

\newsiamremark{remark}{Remark}
\usepackage{aliascnt}
\newaliascnt{assumption}{theorem}
\newtheorem{assumption}[assumption]{Assumption}
\aliascntresetthe{assumption}

\crefname{assumption}{assumption}{assumptions}
\Crefname{assumption}{Assumption}{Assumptions}

\headers{Partially Linear Bayesian Inverse Problems}{Z. Zhang, D. Luo, T. O'Leary-Roseberry, U. Villa, and O. Ghattas}

\title{Structure-Preserving Dimension Reduction and Surrogates for \\
Partially Linear Bayesian Inverse Problems%
\thanks{
This material is based upon work supported by the
U.S. National Science Foundation under awards 2425922 and 2313033.
}}

\author{ Ziheng Zhang\thanks{Oden Institute for Computational Engineering and Sciences, The University of Texas at Austin, Austin, TX (\email{ziheng@utexas.edu}).}
\and Dingcheng Luo\thanks{Faculty of Science, School of Mathematical Sciences, Queensland University of Technology, Brisbane, Queensland, Australia (\email{dingcheng.luo@qut.edu.au}).}
\and Thomas O'Leary-Roseberry\thanks{Department of Mathematics, The Ohio State University, Columbus, OH (\email{tomor@math.osu.edu}).} 
\and Umberto Villa\thanks{Department of Biomedical Engineering and Oden Institute for Computational Engineering and Sciences, The University of Texas at Austin, Austin, TX (\email{uvilla@oden.utexas.edu}).}
\and Omar Ghattas\thanks{Oden Institute for Computational Engineering and Sciences and Walker Department of Mechanical Engineering, The University of Texas at Austin, Austin, TX (\email{omar@oden.utexas.edu}).} }

\begin{document}

\maketitle

\begin{abstract}
Many Bayesian inverse problems are characterized by two parameter vectors with distinct roles. A medium property, operator coefficient, or geometry block enters the forward model nonlinearly, whereas a volumetric, initial condition, or boundary condition source enters linearly. General-purpose nonlinear surrogates must relearn this known source dependence from data and may not preserve the correct source scaling outside the training regime. We introduce a structure-preserving partially linear (SPPL) framework that constructs separate reduced spaces for outputs, (linearly appearing) sources, and (nonlinearly appearing) medium variables and represents the reduced forward map as an affine function of the retained source coordinates, with coefficients that depend on the medium variables. Here, “affine” refers to the retained, mean-centered source coordinates: the physical forward map remains linear in the source, while the affine offset is induced by the source-prior mean and vanishes when that mean is zero. The analysis first identifies the optimal blockwise ridge approximation and then derives the optimal affine-in-source approximation under independent medium and source priors. A direct decomposition of the forward error into output, source, medium, and surrogate contributions yields blockwise spectral criteria and prior-predictive expected-KL control. For a Gaussian source prior, the optimal affine-in-source approximation coincides with the optimal blockwise ridge approximation. In a coseismic numerical example, a three-model controlled ablation on common block-reduced coordinates separates the effects of coefficient-matrix supervision and structure-preserving affine-in-source parameterization. Reduced-space surrogate MCMC provides orders-of-magnitude online acceleration relative to high-fidelity posterior sampling.
\end{abstract}

\begin{keywords}
Bayesian inverse problems, likelihood-informed dimension reduction, partially linear inverse problems, structure-preserving surrogates, neural surrogates, MCMC
\end{keywords}

\begin{MSCcodes}
62F15, 65C40, 65N21, 65D15, 68T07
\end{MSCcodes}

\section{Introduction}
\label{sec:introduction}

Many inverse problems require simultaneous recovery of two parameter block vectors (hereafter referred to as {\em blocks}) with different roles: one block enters the forward model nonlinearly, whereas the other enters linearly. Such mixed linear--nonlinear structure arises, for example, in inverse heat problems and simultaneous structural-parameter/load identification \cite{zhao2020heat_coefficient_initial_state,prells1999variable,tang2022dynamic_load_unknown_parameters}, as well as in geophysical inverse problems that jointly estimate medium properties and sources. We call the parameter-to-observable (PtO) map $F$ partially linear when \( \Fwhite(\bvecm,\bvecs)=\G(\bvecm)\bvecs\), where the medium variable \(\bvecm\) enters nonlinearly through \(\G(\bvecm)\), while the source variable \(\bvecs\) enters linearly. Coseismic fault slip inversion with uncertain material properties provides a motivating example in heterogeneous linear elasticity: the material field changes the elasticity operator, whereas fault slip enters linearly through the fault interface. Puel et al.\ developed a mixed finite-element framework for fault slip \cite{Puel22} and subsequently applied an adjoint formulation to joint inversion of heterogeneous material properties and fault slip \cite{puel2024adjoint_joint}.

A Bayesian formulation quantifies uncertainty in both parameter blocks and their posterior dependence \cite{stuart2010inverse,dashti2017bayesian}. However, posterior exploration requires repeated evaluations of an expensive forward model, motivating function-space and likelihood-informed MCMC \cite{cotter2013mcmc,cui2016dimension}, geometric MCMC for PDE-constrained inverse problems \cite{buithanh2014solving}, measure transport methods \cite{baptista2025approximation}
scalable low-rank computational frameworks \cite{buithanh2013framework}, and surrogate or reduced-order approximations \cite{frangos2010surrogate_rom_inverse,li2014adaptive_surrogates_bip,gao2024adaptive_operator_learning,
cao2026lazydino,cao2025derivative}. A generic projected nonlinear surrogate does not encode the known linear source action and therefore must infer it from training data; in particular, the scaling relation \(F(m,cs)=cF(m,s)\) of the original partially linear map is not enforced by the surrogate architecture. More generally, extrapolation outside the training distribution remains challenging for neural operators \cite{zhu2023reliable_extrapolation}.

To exploit this structure, we propose the structure-preserving partially linear (SPPL) framework, which combines blockwise dimension reduction with a structure-preserving surrogate representation. SPPL constructs separate reduced spaces for the output, source, and medium variables, and represents the reduced forward map as affine in the retained, mean-centered source coordinates while learning only its medium-dependent coefficients. In these coordinates, the original linear source action becomes affine because of the source-prior mean, with the zero-mean case reducing to a linear map. The resulting surrogate can then be used within standard posterior-sampling strategies; in this work, we use it for joint reduced-space MCMC.

For a non-Gaussian source prior, 
the optimal reduced map does not, in general, retain the affine dependence on the source.
We therefore first derive the unrestricted retained-coordinate optimum as a benchmark and then the best approximation within the affine-in-source class used by SPPL.

\paragraph{Main contributions}
These are as follows.
\begin{enumerate}[label=\textnormal{(\roman*)}, leftmargin=1.1cm]
    \item We characterize the optimal blockwise ridge approximation and its medium/source conditional-mean factorization under independent priors.

    \item We derive the best affine-in-source approximation for source priors with finite, positive-definite covariance, together with a direct output--source--medium--surrogate error decomposition, spectral reduction criteria, and prior-predictive expected-KL control.

    \item We derive a medium-only coefficient loss and a PDE factorization-reuse workflow for constructing the medium diagnostic and coefficient targets. For Gaussian sources, the optimal affine and blockwise ridge approximations coincide.

    \item We validate SPPL on quadratic benchmarks and a PDE-based coseismic inversion, using a controlled three-model ablation to separate source parameterization from training supervision and evaluating posterior fidelity, source-scale robustness, and reduced-space MCMC performance.
\end{enumerate}

\paragraph{Related work}
Our work connects most directly to mixed linear--nonlinear optimization, derivative-informed dimension reduction, and structure-aware surrogate modeling.

First, separable nonlinear least squares and variable projection exploit mixed linear--nonlinear structure by eliminating or conditionally optimizing the linear variables while updating the nonlinear variables \cite{golub1973differentiation,golub2003separable,chung2010efficient,espanol2023variable}; related source-estimation strategies have also been used in seismic inversion \cite{rickett2013variable_projection_unknown_source,fang2018source_estimation_wri}. SPPL exploits the same structural separation at the reduced forward-model level, preserving the linear source action explicitly while learning the nonlinear medium dependence and treating both parameter blocks as uncertain.

Second, SPPL is related to likelihood-informed, active-subspace, low-rank posterior approximation, and gradient-based dimension reduction, which use derivative, Fisher, Hessian, or Poincar\'e information to identify informative parameter or observable directions \cite{cui2014likelihood,cui2016dimension,cui2016scalable,zahm2020gradient,cui2021data,zahm2022certified,constantine2016accelerating,spantini2015optimal,baptista2022gradient,chen2025coupled}. Unlike approaches that reduce a single joint parameter, SPPL derives separate output, source, and medium criteria directly from the partially linear map structure.

Third, SPPL builds on surrogate and reduced-order modeling for Bayesian inverse problems and parametric PDEs, including reduced-order, adaptive, Gaussian-process, neural, and operator-learning approaches \cite{frangos2010surrogate_rom_inverse,li2014adaptive_surrogates_bip,yan2020adaptive_dnn_bip,gao2024adaptive_operator_learning,bai2024gp_bayesian_inverse,lu2021learning,kovachki2023neural,zhu2023reliable_extrapolation}. Related work establishes approximation guarantees for Bayesian inverse problems \cite{cotter2010approximation_bip_pdes,stuart2018posterior_consistency_gp}, while other approaches use approximate models within MCMC \cite{christen2005markov,conrad2018parallel_local_approx_mcmc}. Derivative-informed projected neural networks (DIPNets), used below as a baseline, combine derivative-informed reduced coordinates with nonlinear projected surrogates \cite{oleary2022derivative}. Related derivative-informed operator-learning developments address high-dimensional parametric derivative learning, approximation-error analysis, and acceleration of geometric MCMC \cite{oleary2024derivative,luo2026dimension,cao2025derivative}; these approaches do not impose the affine retained-source action used by SPPL. Most closely, NEMO \cite{pham2026nemo} learns a reduced medium-dependent matrix operator while preserving linear action on the inversion parameters. Its formulation treats the nonlinear model parameters as externally characterized during inversion, whereas SPPL treats both parameter blocks as uncertain and derives the optimal blockwise ridge and affine factorizations, blockwise reduction criteria, and Bayesian error control.

Finally, in the terminology of theory-guided and physics-guided scientific learning \cite{karpatne2017theory_guided,faroughi2024physics_guided_encoded}, SPPL is physics-encoded: affine dependence on the retained source coordinates is imposed by the surrogate class rather than encouraged through a soft penalty.

\section{Partially Linear Bayesian Inverse Problems}
\label{sec:partial_linear_formulation}

We first recall the generic Bayesian inverse problem in whitened form, before specializing to the partially linear structure considered in this work.

\subsection{Whitened Bayesian formulation and forward-error control}
\label{sec:whitened_formulation}

Let \(\btheta\) denote the unknown parameter and suppose
\(\dobs=\forig(\btheta)+\bm{\eta}\), where
\(\bm{\eta}\sim\N{\bm{0}}{\Signoise}\) and \(\Signoise\) is symmetric positive definite. Define
\begin{equation*}
    \yobs \coloneqq \Signoise^{-1/2}\dobs,
    \qquad
    \Fwhite(\btheta) \coloneqq \Signoise^{-1/2}\forig(\btheta),
    \qquad
    \bm{\epsilon} \coloneqq \Signoise^{-1/2}\bm{\eta}.
\end{equation*}
Then \(\yobs=\Fwhite(\btheta)+\bm{\epsilon}\), with
\(\bm{\epsilon}\sim\N{\bm{0}}{I}\), and
\[
    \post(\btheta)
    \propto
    \exp\!\left(-\frac12\norm{\Fwhite(\btheta)-\yobs}^2\right)
    \prior(\btheta).
\]
Here \(\norm{\cdot}\) is the Euclidean norm, and
\(\Rnorm{v}{R}^2\coloneqq v^T Rv\) for any SPD matrix \(R\).

A surrogate \(\tilde{\Fwhite}\) defines the analogous posterior
\[
    \tilde{\pi}_{\mathrm{post}}^{\yobs}(\btheta)
    \propto
    \exp\!\left(-\frac12\norm{\tilde{\Fwhite}(\btheta)-\yobs}^2\right)
    \prior(\btheta).
\]
Under the prior-predictive law obtained by drawing
\(\btheta\sim\prior\) and \(\bm{\epsilon}\sim\mathcal N(0,I)\) independently and setting
\(\yobs=\Fwhite(\btheta)+\bm{\epsilon}\), the data-free bound of
Cui and Zahm~\cite{cui2021data} gives
\begin{equation}\label{eq:prior_predictive_kl_forward_bound}
    \Ex{\yobs}{
        D_{\mathrm{KL}}\!\left(
            \post \,\|\, \tilde{\pi}_{\mathrm{post}}^{\yobs}
        \right)
    }
    \le
    \frac12
    \Ex{\btheta\sim\prior}{
        \norm{\Fwhite(\btheta)-\tilde{\Fwhite}(\btheta)}^2 } = \frac{1}{2}\|\Fwhite - \tilde{\Fwhite}\|^2_{L^2_{\prior}}.
\end{equation}
Thus prior-weighted forward accuracy controls the prior-predictive posterior error.

\subsection{Partially linear model and priors}
\label{sec:partially_linear_model}

Let \(\btheta=(\bvecm,\bvecs)\), with
\(\bvecm\in\mathbb R^{d_m}\), \(\bvecs\in\mathbb R^{d_s}\), and
\(\Fwhite(\bvecm,\bvecs)\in\mathbb R^{n_{\mathrm{obs}}}\). In a canonical PDE setting,
\[
    \mathcal L(\bvecm)\bm x=\mathcal S\bvecs,
    \qquad
    \dobs=\mathcal B\bm x+\bm\eta,
\]
where \(\bm x\) denotes the state and \(\mathcal S\) is a fixed linear
source-injection operator. The state equation is linear in \(\bm x\) and
\(\bvecs\), while the operator depends nonlinearly on \(\bvecm\). Defining
\[
\Gm
\coloneqq
\Signoise^{-1/2}\mathcal B\mathcal L(\bvecm)^{-1}\mathcal S
\in\mathbb R^{n_{\mathrm{obs}}\times d_s},
\qquad
\Fwhite(\bvecm,\bvecs)=\Gm\bvecs,
\]
gives the partially linear whitened map. The term ``partially linear'' refers
to this PtO structure, not to the differential operator itself.

We assume independent priors,
\(\prior(\bvecm,\bvecs)=\priorm(\bvecm)\priors(\bvecs)\), so the posterior is given by
\[
 \post(\bvecm,\bvecs)
 \propto
 \exp\!\left(-\frac12\norm{\Gm\bvecs-\yobs}^2\right)
 \priorm(\bvecm)\priors(\bvecs).
\]
For Gaussian priors
\(\priorm=\N{\barm}{\bm C_m}\) and
\(\priors=\N{\bars}{\bm C_s}\), with SPD covariance matrices
\(\bm C_m\) and \(\bm C_s\), the negative log-posterior is
\begin{equation*}
\mathcal{J}^{\yobs}(\bvecm,\bvecs)
=
\frac12\norm{\Gm\bvecs-\yobs}^2
+\frac12\Rnorm{\bvecm-\barm}{\bm C_m^{-1}}^2
+\frac12\Rnorm{\bvecs-\bars}{\bm C_s^{-1}}^2.
\end{equation*}

With this partially linear Bayesian formulation in place, we next exploit its block structure to develop separate output, source, and medium reductions and derive structure-preserving approximation and error control results.

\section{Structure-Preserving Reduction Theory}
\label{sec:sppl_reduction_theory}

We now exploit the partially linear structure
\(\Fwhite(\bvecm,\bvecs)=\G(\bvecm)\bvecs\)
to derive separate reductions for the output, source, and medium variables. Conditional expectation characterizes the \(L^2_{\prior}\)-optimal retained-coordinate approximations, while gradient-based Poincar\'e bounds provide a computable criterion for medium reduction~\cite{zahm2020gradient,cui2021data}.

\subsection{Basic setting and reduced coordinates}
\label{sec:reduced_coordinates}

\begin{assumption}[Standing assumptions]
\label{ass:standing_assumptions}
The following conditions hold.
\begin{enumerate}[label=\textnormal{(\roman*)}, leftmargin=1.1cm, itemsep=2pt, topsep=2pt]
\item The medium and source variables are independent under the prior,
\(\prior(\bvecm,\bvecs)=\priorm(\bvecm)\priors(\bvecs)\).

\item The map \(\bvecm\mapsto\G(\bvecm)\) is continuously differentiable and
\begin{equation*}
\Ex{\bvecm\sim\priorm}{
\|\Gm\|_{\rm F}^2+
\sum_{\ell=1}^{d_m}\|\partial_{\bvecm_\ell}\Gm\|_{\rm F}^2
}<\infty.
\end{equation*}
Whenever an expected-KL bound is invoked, the exact and approximate posterior normalizing constants are finite for almost every \(\yobs\) under the prior-predictive distribution.

\item The source prior has finite second moment and positive-definite covariance. Define
\begin{equation*}
\bars\coloneqq\Ex{\bvecs\sim\priors}{\bvecs},
\qquad
\bm C_s\coloneqq
\Ex{\bvecs\sim\priors}{
(\bvecs-\bars)(\bvecs-\bars)^T},
\qquad
\Rs\coloneqq\bm C_s^{-1}.
\end{equation*}
\end{enumerate}
\end{assumption}

These assumptions imply the square-integrability of \(\G(\bvecm)\bvecs\). The differentiability and integrability conditions will also ensure that the medium Jacobian quantities introduced later are square integrable.

With \(\bm C_s\) now defined, the source and medium blocks use distinct metric matrices. The source metric is \(\Rs=\bm C_s^{-1}\). For the medium block, we fix a centering vector \(\barm\in\mathbb R^{d_m}\) and an SPD matrix \(\Rm\), which defines the weighted inner product used for the retained medium coordinates. For a Gaussian medium prior \(\priorm=\N{\barm}{\bm C_m}\), the canonical choice is the prior precision, \(\Rm=\bm C_m^{-1}\).

\paragraph{Reduced-coordinate conventions}
With these quantities fixed, let
\(\Phiy\in\mathbb R^{n_{\mathrm{obs}}\times r_y}\),
\(\Phim\in\mathbb R^{d_m\times r_m}\), and
\(\Phis\in\mathbb R^{d_s\times r_s}\) 
be reduced bases of the observable, medium, and source spaces that
satisfy
\[
\Phiy^T\Phiy=I_{r_y},
\qquad
\Phim^T\Rm\Phim=I_{r_m},
\qquad
\Phis^T\Rs\Phis=I_{r_s}.
\]
We define projection and encoding operators
\[
\Py=\Phiy\Phiy^T,
\qquad
\Phim^*=\Phim^T\Rm,
\quad \Pm=\Phim\Phim^*,
\qquad
\Phis^*=\Phis^T\Rs,
\quad \Ps=\Phis\Phis^*.
\]
Here, \(\Py\) is Euclidean orthogonal, while \(\Pm\) and \(\Ps\) are
\(\Rm\)- and \(\Rs\)-orthogonal. The retained coordinates are
\[
    \bmu=\Phim^*(\bvecm-\barm),
    \qquad
    \balpha=\Phis^*(\bvecs-\bars).
\]
By the source-moment definitions and the normalization
\(\Phis^T\Rs\Phis=I_{r_s}\), the retained source coordinates satisfy
\( \E{\balpha}=0, \E{\balpha\balpha^T}=I_{r_s}\). Because \(\barm\) and \(\bars\) are fixed,
\[
\Pm\bvecm=\Pm\barm+\Phim\bmu,
\qquad
\Ps\bvecs=\Ps\bars+\Phis\balpha.
\]
Hence \(\sigma(\Pm\bvecm)=\sigma(\bmu)\) and \(\sigma(\Ps\bvecs)=\sigma(\balpha)\): conditioning on \(\Pm\bvecm\) or \(\Ps\bvecs\) below is therefore equivalent to conditioning on the corresponding centered retained coordinates. Finally, we set
\(\Ptheta=\diag(\Pm,\Ps)\).

\subsection{Optimal blockwise ridge approximation}
\label{sec:blockwise_ridge_approximation}

For fixed projectors, let
\(\bvecm_r=\Pm\bvecm\), \(\bvecs_r=\Ps\bvecs\) denote the reduced coordinates. 
We also define the following conditional expectations:
\begin{equation*}
\Gopt(\bvecm_r)
\coloneqq\E{\Gm\mid\Pm\bvecm=\bvecm_r},
\qquad
\sopt(\bvecs_r)
\coloneqq\E{\bvecs\mid\Ps\bvecs=\bvecs_r}.
\end{equation*}
All conditional-expectation identities are understood almost surely.

\begin{definition}[Blockwise ridge approximation class]
\label{def:blockwise_ridge_approximation_class}
Let \(\Bproj(\Pm,\Ps,\Py)\) denote, up to almost-sure equality, the class of maps of the form
\( \tilde{\Fwhite}(\Ptheta\btheta) = \Py\,g(\Pm\bvecm,\Ps\bvecs) \), where \(g\) is square integrable. Thus each element within the class is a function that depends only on the retained medium and source variables and takes values in \(\operatorname{Im}(\Py)\).
\end{definition}

\begin{theorem}[Optimal blockwise ridge approximation]
\label{thm:optimal_blockwise_ridge_approximation}
The \(L^2_{\prior}\)-optimal element of \\
\(\Bproj(\Pm,\Ps,\Py)\) is
\begin{equation*}
\FoptY(\Ptheta\btheta)
\coloneqq
\E{\Py\Fwhite(\btheta)\mid\Ptheta\btheta}
=
\Py\,\Gopt(\bvecm_r)\,\sopt(\bvecs_r).
\end{equation*}
\end{theorem}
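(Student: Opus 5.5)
The proof has two parts. The first is the $L^2$ projection characterization of conditional expectation. The second is a factorization of that conditional expectation, which uses prior independence (\Cref{ass:standing_assumptions}(i)).

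\emph{Optimality.} Write $\mathcal{G}=\sigma(\Ptheta\btheta)=\sigma(\Pm\bvecm,\Ps\bvecs)$. By \Cref{ass:standing_assumptions}(ii)--(iii) and independence, $\E{\norm{\Gm\bvecs}^2}\le \E{\|\Gm\|_{\rm F}^2}\,\E{\norm{\bvecs}^2}<\infty$, so $\Fwhite\in L^2_{\prior}$. Hence $\E{\Py\Fwhite\mid\mathcal G}$ is well defined and square integrable, and it equals $\Py\,\E{\Fwhite\mid\mathcal G}$ because $\Py$ is a fixed matrix. This candidate lies in $\Bproj$: it is $\mathcal G$-measurable, so by Doob--Dynkin it equals $g(\Pm\bvecm,\Ps\bvecs)$ for some measurable square-integrable $g$, and it takes values in $\operatorname{Im}(\Py)$.

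Now take any $\tilde{\Fwhite}=\Py g\in\Bproj$. Since $\Py$ is Euclidean orthogonal,
\[
\norm{\Fwhite-\Py g}^2=\norm{(I-\Py)\Fwhite}^2+\norm{\Py\Fwhite-\Py g}^2 .
\]
The first term does not depend on $g$. In the second term, $\Py g$ is $\mathcal G$-measurable, so the Pythagorean identity for conditional expectation gives
\[
\E{\norm{\Py\Fwhite-\Py g}^2}=\E{\norm{\Py\Fwhite-\FoptY}^2}+\E{\norm{\FoptY-\Py g}^2}.
\]
The cross term vanishes by the tower property. The second term is nonnegative and vanishes only if $\Py g=\FoptY$ almost surely, which gives optimality and uniqueness up to a.s.\ equality.

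\emph{Factorization.} It remains to show $\E{\Gm\bvecs\mid \Pm\bvecm,\Ps\bvecs}=\Gopt(\Pm\bvecm)\,\sopt(\Ps\bvecs)$. Since $\Gm$ is $\sigma(\bvecm)$-measurable and $\bvecs$ is $\sigma(\bvecs)$-measurable, and these $\sigma$-algebras are independent, I would verify the defining property on the $\pi$-system of rectangles $\{\Pm\bvecm\in A\}\cap\{\Ps\bvecs\in B\}$. This $\pi$-system generates $\mathcal G$, so a monotone-class ($\pi$--$\lambda$) argument extends the identity to all of $\mathcal G$; the integrability established above justifies that extension. On a rectangle, independence gives
\[
\E{\Gm\bvecs\,1_A(\Pm\bvecm)1_B(\Ps\bvecs)}=\E{\Gm 1_A(\Pm\bvecm)}\,\E{\bvecs 1_B(\Ps\bvecs)} .
\]
By the definitions of $\Gopt$ and $\sopt$, this equals $\E{\Gopt 1_A}\,\E{\sopt 1_B}$. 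Since $\Gopt(\Pm\bvecm)$ is a function of $\bvecm$ and $\sopt(\Ps\bvecs)$ is a function of $\bvecs$, they are again independent, and the product recombines to $\E{\Gopt\sopt\, 1_A 1_B}$. For this last step, $\Gopt\sopt$ must be integrable. This follows from Jensen's inequality, $\E{\|\Gopt\|_{\rm F}^2}\le\E{\|\Gm\|_{\rm F}^2}$ and likewise for $\sopt$, together with independence.

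I expect the main obstacle to be this measure-theoretic factorization step: the conditioning is on the joint $\sigma$-algebra rather than on each block separately, so care is needed to justify the product formula on rectangles and then extend it. Everything else is the standard Hilbert-space projection argument.
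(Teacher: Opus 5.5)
Your proposal is correct and takes the same approach as the paper. Both identify the optimum as the $L^2_{\prior}$ projection $\E{\Py\Fwhite\mid\Pm\bvecm,\Ps\bvecs}$ onto the closed subspace of $\operatorname{Im}(\Py)$-valued, $\sigma(\Pm\bvecm,\Ps\bvecs)$-measurable functions, and then factor it using prior independence of the two blocks. You add detail the paper leaves implicit (the explicit output-orthogonality split, Doob--Dynkin membership, and the $\pi$--$\lambda$ verification of the product formula, where the paper just asserts that the conditional law factorizes).
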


See \cref{app:proof_blockwise_ridge_approximation} for the proof.

\subsection{Practical affine-in-source class}
\label{sec:affine_in_source_class}

This optimal blockwise ridge approximation preserves the operator--source factorization, but for a non-Gaussian source prior the retained conditional mean \(\sopt(\Ps\bvecs)\) can be nonlinear. The practical SPPL construction therefore restricts the retained source dependence to an affine class.

\begin{definition}[Affine-in-source approximation class]
\label{def:affine_approximation_class}
Let \(\Aaff(\Pm,\Ps,\Py)\) be the subspace of
\(\Bproj(\Pm,\Ps,\Py)\) consisting of maps
\begin{equation*}
\tilde{\Fwhite}_{\mathrm{aff}}(\Ptheta\btheta)
=
a(\Pm\bvecm)+B(\Pm\bvecm)\balpha,
\end{equation*}
where
\( a:\operatorname{Im}(\Pm)\to\mathbb R^{n_{\mathrm{obs}}}, B:\operatorname{Im}(\Pm)\to
\mathbb R^{n_{\mathrm{obs}}\times r_s} \) are measurable and satisfy
\[
\Ex{\bvecm\sim\priorm}{
\|a(\Pm\bvecm)\|^2+\|B(\Pm\bvecm)\|_{\mathrm F}^2}<\infty,
\]
together with \(\Py a(z)=a(z)\), \(\Py B(z)=B(z)\), \(z\in\operatorname{Im}(\Pm)\).
Thus \(a\) is the medium-dependent affine offset and \(B\) is the
medium-dependent linear map acting on the retained source coordinates
\(\balpha\in\mathbb R^{r_s}\).
\end{definition}

\begin{theorem}[Optimal affine-in-source approximation]
\label{thm:optimal_affine_in_source}
The \(L^2_{\prior}\)-projection of \(\Fwhite\) onto
\(\Aaff(\Pm,\Ps,\Py)\) is
\begin{equation*}
\tilde{\Fwhite}_{\mathrm{aff}}^*(\Ptheta\btheta)
=
\Py\,\Gopt(\bvecm_r)\,
\widehat{\bvecs}_{\mathrm{aff}}(\balpha),
\end{equation*}
with
\( \widehat{\bvecs}_{\mathrm{aff}}(\balpha)
\coloneqq \bars+\Phis\balpha = \bars+\Ps(\bvecs-\bars) \).
Equivalently, this is the \(L^2_{\prior}\)-projection of the optimal blockwise ridge approximation
\(\FoptY\) onto \(\Aaff(\Pm,\Ps,\Py)\).
\end{theorem}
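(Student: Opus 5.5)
We plan to characterize the projection through the normal equations. Since \(\Aaff(\Pm,\Ps,\Py)\) is a linear subspace of \(L^2_{\prior}\), a candidate \(\tilde\Fwhite\in\Aaff\) is the \(L^2_{\prior}\)-projection of \(\Fwhite\) if and only if
\[
\E{\langle \Fwhite(\btheta)-\tilde\Fwhite(\Ptheta\btheta),\, a(\Pm\bvecm)+B(\Pm\bvecm)\balpha\rangle}=0
\]
for all admissible \((a,B)\). The class is not closed in full generality, but existence of the projection is not needed: once the candidate is shown to lie in \(\Aaff\) and to satisfy orthogonality, it is the unique minimizer by Pythagoras.

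\textbf{Step 1 (membership).} Write the candidate as \(a^*+B^*\balpha\) with
\[
a^*=\Py\Gopt(\bvecm_r)\bars,\qquad B^*=\Py\Gopt(\bvecm_r)\Phis .
\]
Both depend only on \(\Pm\bvecm\) and satisfy \(\Py a^*=a^*\) and \(\Py B^*=B^*\). Square integrability follows from conditional Jensen together with \Cref{ass:standing_assumptions}(ii).

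\textbf{Step 2 (reduction to the ridge optimum).} Every element of \(\Aaff\) is \(\Py\)-valued and a function of \(\Ptheta\btheta\). Hence, by the tower property,
\[
\E{\langle \Fwhite,h\rangle}=\E{\langle \Py\Fwhite,h\rangle}=\E{\langle \FoptY,h\rangle}
\qquad\text{for all } h\in\Aaff .
\]
So it suffices to verify orthogonality of \(\FoptY-\tilde\Fwhite_{\mathrm{aff}}^*\) against \(\Aaff\). This simultaneously gives the ``equivalently'' statement. By \cref{thm:optimal_blockwise_ridge_approximation},
\[
\FoptY-\tilde\Fwhite_{\mathrm{aff}}^*=\Py\Gopt(\bvecm_r)\bigl(\sopt(\bvecs_r)-\bars-\Phis\balpha\bigr).
\]
Denote the bracket by \(\bm e(\balpha)\); it is a function of \(\balpha\) only.

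\textbf{Step 3 (moment identities).} By independence, \(\bm e\) and the medium-dependent factors \(\Py\Gopt(\bvecm_r)\), \(a(\Pm\bvecm)\), \(B(\Pm\bvecm)\) are independent. The inner product therefore factors into medium expectations multiplied by the source moments \(\E{\bm e}\) and \(\E{\bm e\balpha^T}\). It suffices to show that both vanish.

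For the mean, \(\E{\sopt}=\bars\) by the tower property and \(\E{\balpha}=0\), so \(\E{\bm e}=0\).

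For the cross moment, since \(\balpha\) is \(\sigma(\Ps\bvecs)\)-measurable, \(\E{\sopt\balpha^T}=\E{\bvecs\balpha^T}\). Using \(\E{\balpha}=0\), this equals
\[
\E{(\bvecs-\bars)(\bvecs-\bars)^T}\Rs\Phis=\bm C_s\Rs\Phis=\Phis .
\]
Also \(\E{\Phis\balpha\balpha^T}=\Phis\), since \(\E{\balpha\balpha^T}=I_{r_s}\). Hence \(\E{\bm e\balpha^T}=0\).

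Expanding the inner product as a trace, for instance \(\tr\bigl(B^T\Py\Gopt\,\bm e\balpha^T\bigr)\), and conditioning on the medium then yields zero.

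\textbf{Main obstacle.} The delicate part is Step 3's factorization. It must be justified with integrability, using Cauchy–Schwarz and the finite second moments of \(\bm e\) and \(\balpha\), so that Fubini applies to the product of independent medium and source factors. One must also check that \(\Gopt\) depends only on \(\bvecm\), which holds because the source is independent of the medium. The key algebraic fact is \(\bm C_s\Rs=I\), which is exactly where the choice of source metric \(\Rs=\bm C_s^{-1}\) enters.
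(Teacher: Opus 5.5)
Your proposal is correct, but it takes a different route from the paper. The paper first proves that \(\Aaff\) is closed. It uses the isometry \(\Ex{\prior}{\|a(\bvecm_r)+B(\bvecm_r)\balpha\|^2}=\Ex{\priorm}{\|a\|^2+\|B\|_{\mathrm F}^2}\), which holds because \(\balpha\) is independent of \(\bvecm_r\) with identity second moment. It then solves the conditional affine normal equations given \(\bvecm_r\) directly for \(\Py\Fwhite\), obtaining \(a^*=\E{\Py\Gm\bvecs\mid\bvecm_r}\) and \(B^*=\E{\Py\Gm\bvecs\balpha^T\mid\bvecm_r}\). These are evaluated using independence and \(\E{\bvecs\balpha^T}=\Phis\). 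The ``equivalently'' clause is deduced at the end, from the fact that projections onto nested closed subspaces compose.

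You instead guess the candidate and verify orthogonality. You route through \(\FoptY\) by the tower property, so the ``equivalently'' clause comes out at the same time as the main claim. The whole verification then reduces to the two moment conditions \(\E{\bm{e}}=0\) and \(\E{\bm{e}\balpha^T}=0\) for \(\bm{e}=\sopt-\widehat{\bvecs}_{\mathrm{aff}}\). This makes transparent that \(\widehat{\bvecs}_{\mathrm{aff}}\) is the best affine predictor of \(\sopt\) (equivalently, of \(\bvecs\)) from \(\balpha\), which is why it coincides with \(\sopt\) for Gaussian sources. The paper's route instead derives the coefficients rather than guessing them, and it establishes existence and uniqueness of the projection without reference to a candidate. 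Both arguments rest on the same identity \(\bm C_s\Rs=I\).

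One correction: your remark that \(\Aaff\) is not closed in general is wrong. By the isometry above, \(\Aaff\) is the isometric image of a Hilbert space of square-integrable coefficient pairs \((a,B)\), hence complete and closed. The error is harmless, since your verification-plus-Pythagoras argument never uses closedness.
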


See \Cref{app:proof_optimal_affine_in_source} for the proof.

Thus the optimal affine-in-source approximation depends on the source prior only through its mean and covariance. For Gaussian source priors, restricting the retained source dependence to
the affine class introduces no additional approximation error.

\begin{corollary}[Exactness of the affine restriction for Gaussian source priors]
\label{cor:gaussian_affine_restriction_exactness}
If \(\priors=\N{\bars}{\Rs^{-1}}\), then
\begin{equation}
\label{eq:gaussian_source_conditional_mean}
\sopt(\Ps\bvecs)
=
\E{\bvecs\mid\Ps\bvecs}
=
\bars+\Ps(\bvecs-\bars)
=
\bars+\Phis\balpha.
\end{equation}
Consequently,
\begin{equation}
\label{eq:gaussian_affine_equals_blockwise_ridge}
\widehat{\bvecs}_{\mathrm{aff}}(\balpha)=\sopt(\Ps\bvecs),
\qquad
\tilde{\Fwhite}_{\mathrm{aff}}^*(\Ptheta\btheta)
=
\FoptY(\Ptheta\btheta)
=
\Py\Gopt(\bvecm_r)(\bars+\Phis\balpha).
\end{equation}
\end{corollary}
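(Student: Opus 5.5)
The proof reduces to computing a Gaussian conditional mean and then citing \cref{thm:optimal_blockwise_ridge_approximation} and \cref{thm:optimal_affine_in_source}. The main identity is \eqref{eq:gaussian_source_conditional_mean}; once it holds, \eqref{eq:gaussian_affine_equals_blockwise_ridge} follows by substitution.

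To obtain the conditional mean, I would split the centered source $\bvecs-\bars$ into its retained part $\Ps(\bvecs-\bars)=\Phis\balpha$ and its complement $(I-\Ps)(\bvecs-\bars)$. Both are linear images of the same Gaussian vector, so they are jointly Gaussian, and their cross-covariance is
\[
\E{(I-\Ps)(\bvecs-\bars)(\bvecs-\bars)^T\Ps^T}=(I-\Ps)\bm C_s\Ps^T .
\]
With $\Ps=\Phis\Phis^T\Rs$ and $\bm C_s=\Rs^{-1}$, we get $\bm C_s\Ps^T=\bm C_s\Rs\Phis\Phis^T=\Phis\Phis^T$. Then $\Ps\Phis\Phis^T=\Phis(\Phis^T\Rs\Phis)\Phis^T=\Phis\Phis^T$, so the cross-covariance $(I-\Ps)\Phis\Phis^T$ vanishes. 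This is the one step where the normalization $\Phis^T\Rs\Phis=I_{r_s}$ and the choice $\Rs=\bm C_s^{-1}$ are essential: $\Ps$ must be orthogonal in the metric induced by the prior covariance.

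Uncorrelated jointly Gaussian vectors are independent, so $(I-\Ps)(\bvecs-\bars)$ is independent of $\Ps\bvecs$ and has mean zero. Hence
\[
\E{\bvecs\mid\Ps\bvecs}=\bars+\Ps(\bvecs-\bars)+\E{(I-\Ps)(\bvecs-\bars)}=\bars+\Ps(\bvecs-\bars).
\]
Since $\sigma(\Ps\bvecs)=\sigma(\balpha)$ and $\Ps(\bvecs-\bars)=\Phis\balpha$, this also equals $\bars+\Phis\balpha$, which proves \eqref{eq:gaussian_source_conditional_mean}.

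For the consequences, the definition of $\widehat{\bvecs}_{\mathrm{aff}}$ in \cref{thm:optimal_affine_in_source} coincides with the right-hand side above, so $\widehat{\bvecs}_{\mathrm{aff}}(\balpha)=\sopt(\Ps\bvecs)$. Substituting this into the formula of \cref{thm:optimal_affine_in_source} and comparing with $\FoptY=\Py\Gopt(\bvecm_r)\sopt(\bvecs_r)$ from \cref{thm:optimal_blockwise_ridge_approximation} gives
\[
\tilde{\Fwhite}_{\mathrm{aff}}^*(\Ptheta\btheta)=\FoptY(\Ptheta\btheta)=\Py\Gopt(\bvecm_r)(\bars+\Phis\balpha).
\]
As a consistency check, $\FoptY$ then already has the affine form $a+B\balpha$ with $a=\Py\Gopt\bars$ and $B=\Py\Gopt\Phis$, and these coefficients satisfy the integrability conditions of \cref{def:affine_approximation_class} by \cref{ass:standing_assumptions}. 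So $\FoptY$ lies in $\Aaff$, and projecting it onto $\Aaff$ leaves it unchanged, as the final statement of \cref{thm:optimal_affine_in_source} requires.
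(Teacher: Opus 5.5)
Your proof is correct and follows essentially the same route as the paper. Both arguments show that $\Ps(\bvecs-\bars)$ and $(I-\Ps)(\bvecs-\bars)$ are uncorrelated, hence independent Gaussian components; you compute the transposed cross-covariance $(I-\Ps)\bm C_s\Ps^T$ where the paper computes $\Ps\Rs^{-1}(I-\Ps)^T$. This gives the affine conditional mean, and then both substitute into \cref{thm:optimal_affine_in_source}; your closing check that $\FoptY$ already lies in $\Aaff$ is a harmless extra consistency argument the paper does not include.
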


See \Cref{app:proof_gaussian_affine_restriction_exactness} for the proof.

We now work directly with this practical affine class. Its prior-averaged forward
error admits a decomposition whose terms determine the reduction criteria used
in the algorithm.

\subsection{Direct affine error decomposition and exact residuals}
\label{sec:affine_error_decomposition}

For the affine approximation class, we now decompose the prior-averaged forward error into contributions from output truncation, source reduction, medium reduction, and surrogate approximation. The first two contributions admit exact spectral trace representations. Define the uncentered forward-output second-moment matrix
\begin{equation*}
\Cy
=
\Ex{\bvecm\sim\priorm}{
\Gm\left(\Ex{\bvecs\sim\priors}{\bvecs\bvecs^T}\right)\Gm^T},
\end{equation*}
and, after fixing \(\Py\), the source diagnostic matrix
\begin{equation*}
\Hss(\Py)
\coloneqq
\Ex{\bvecm\sim\priorm}{\Gm^T\Py\Gm}.
\end{equation*}
For the projected whitened Gaussian likelihood, \(\Gm^T\Py\Gm\) is the source Gauss--Newton Hessian (equivalently, Fisher information matrix) conditional on \(\bvecm\); hence \(\Hss(\Py)\) is its prior average.

\begin{proposition}[Direct affine error decomposition]
\label{prop:affine_error_decomposition}
For any
\(\tilde{\Fwhite}_{\mathrm{aff}}\in\Aaff(\Pm,\Ps,\Py)\),
\begin{equation}
\label{eq:affine_direct_mse_split}
\Ex{\btheta\sim\prior}{
\left\|\Fwhite(\btheta)-
\tilde{\Fwhite}_{\mathrm{aff}}(\Ptheta\btheta)\right\|^2}
=
\Robs+\Rsrcaff+\Cmedaff+\Rsurraff,
\end{equation}
where
\begin{align}
\Robs
&\coloneqq
\Ex{\prior}{\|(I-\Py)\Fwhite\|^2}
=
\tr\!\left((I-\Py)\Cy(I-\Py)\right),
\label{eq:Robs}\\
\Rsrcaff
&\coloneqq
\Ex{\prior}{
\left\|\Py\Gm
\left(\bvecs-\widehat{\bvecs}_{\mathrm{aff}}(\balpha)\right)
\right\|^2} =
\tr\!\left(
\Rs^{-1}(I-\Ps^T)\Hss(\Py)(I-\Ps)
\right),
\label{eq:Rsrc_aff_exact}\\
\Cmedaff
&\coloneqq
\Ex{\prior}{
\left\|
\Py\left(\Gm-\Gopt(\bvecm_r)\right)
\widehat{\bvecs}_{\mathrm{aff}}(\balpha)
\right\|^2},
\label{eq:Cmed_aff_equiv}\\
\Rsurraff
&\coloneqq
\Ex{\prior}{
\left\|
\tilde{\Fwhite}_{\mathrm{aff}}^*(\Ptheta\btheta)
-
\tilde{\Fwhite}_{\mathrm{aff}}(\Ptheta\btheta)
\right\|^2}.
\label{eq:Rsurr_aff}
\end{align}
\end{proposition}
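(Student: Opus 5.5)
We write the error as a telescoping sum of four orthogonal pieces,
\begin{align*}
\Fwhite-\tilde{\Fwhite}_{\mathrm{aff}}
&=(I-\Py)\Fwhite
+\Py\Gm\bigl(\bvecs-\widehat{\bvecs}_{\mathrm{aff}}(\balpha)\bigr)
+\Py\bigl(\Gm-\Gopt(\bvecm_r)\bigr)\widehat{\bvecs}_{\mathrm{aff}}(\balpha)\\
&\quad+\bigl(\tilde{\Fwhite}_{\mathrm{aff}}^*-\tilde{\Fwhite}_{\mathrm{aff}}\bigr),
\end{align*}
using $\tilde{\Fwhite}_{\mathrm{aff}}^*=\Py\Gopt(\bvecm_r)\widehat{\bvecs}_{\mathrm{aff}}(\balpha)$ from \cref{thm:optimal_affine_in_source}. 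We call the four terms $T_1,\dots,T_4$. Expanding the square, it suffices to show that all six cross terms vanish in $L^2_{\prior}$. Then we evaluate $\Robs$ and $\Rsrcaff$ as traces.

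\emph{Orthogonality.} $T_1$ lies in $\operatorname{Im}(I-\Py)$, and $T_2,T_3,T_4$ lie in $\operatorname{Im}(\Py)$ (for $T_4$ we use $\Py a=a$ and $\Py B=B$). So $T_1$ is pointwise Euclidean-orthogonal to the rest.

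For the remaining pairs we use prior independence together with $\bvecs-\widehat{\bvecs}_{\mathrm{aff}}=(I-\Ps)(\bvecs-\bars)$. This vector is uncorrelated with $\balpha$, since $\E{(I-\Ps)(\bvecs-\bars)(\bvecs-\bars)^T\Rs\Phis}=(I-\Ps)\Phis=0$, and it has zero mean.

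\begin{itemize}
\item[$\langle T_2,T_3\rangle$:] Conditioning on $\bvecm$, this has the form $\E{(\bvecs-\widehat{\bvecs}_{\mathrm{aff}})^T M(\bvecm)\,\widehat{\bvecs}_{\mathrm{aff}}}$. Here $\widehat{\bvecs}_{\mathrm{aff}}$ is affine in $\balpha$, so the source expectation factors into the mean of $(I-\Ps)(\bvecs-\bars)$ and its cross-covariance with $\balpha$. Both are zero.
\item[$\langle T_2,T_4\rangle$:] $T_4$ is affine in $\balpha$ with $\bvecm$-dependent coefficients, so the same argument applies.
\item[$\langle T_3,T_4\rangle$:] Write $T_4=c(\Pm\bvecm)+D(\Pm\bvecm)\balpha$. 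Integrating over $\balpha$ first, using $\E{\balpha}=0$ and $\E{\balpha\balpha^T}=I$, leaves terms of the form $\E{h(\Pm\bvecm)^T\Py(\Gm-\Gopt(\bvecm_r))k}$ with $k$ deterministic or one of the columns $\Phis$ and $\bars$. These vanish by the tower property applied to $\Gopt=\E{\G\mid\Pm\bvecm}$.
\end{itemize}

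The square-integrability in \cref{ass:standing_assumptions} justifies all of these interchanges.

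\emph{Trace formulas.} For $\Robs$, conditional on $\bvecm$ we have $\E{\|(I-\Py)\Gm\bvecs\|^2}=\tr\bigl((I-\Py)\Gm\,\E{\bvecs\bvecs^T}\Gm^T(I-\Py)\bigr)$. Averaging over $\bvecm$ gives the stated expression in $\Cy$.

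For $\Rsrcaff$, conditional on $\bvecm$ we have
\[
\E{\|\Py\Gm(I-\Ps)(\bvecs-\bars)\|^2}=\tr\bigl((I-\Ps)\bm C_s(I-\Ps)^T\Gm^T\Py\Gm\bigr).
\]
Averaging over $\bvecm$ yields $\Hss(\Py)$. Cyclic permutation, with $\bm C_s=\Rs^{-1}$, gives $\tr(\Rs^{-1}(I-\Ps^T)\Hss(I-\Ps))$.

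The main obstacle is the bookkeeping for $\langle T_3,T_4\rangle$. The dependence of $\widehat{\bvecs}_{\mathrm{aff}}$ on $\bars$ produces a nonzero-mean term, so we must check carefully that every surviving piece involves $\Gm-\Gopt$ multiplied only by functions of $\Pm\bvecm$, so that the tower property applies.
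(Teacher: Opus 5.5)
Your proposal is correct and follows the paper's proof essentially step for step. It uses the same four-term splitting $T_1+\dots+T_4$ with $\tilde{\Fwhite}_{\mathrm{aff}}^*=\Py\Gopt(\bvecm_r)\widehat{\bvecs}_{\mathrm{aff}}(\balpha)$, pointwise orthogonality of the output term, the source-moment identities plus independence for the cross terms involving $T_2$, a conditional-expectation argument for $T_3\perp T_4$, and the same cyclic-trace computation for $\Rsrcaff$. The only differences are cosmetic: you integrate out $\balpha$ before applying the tower property, whereas the paper conditions directly on $(\bvecm_r,\balpha)$, and you also verify the $\Cy$ trace formula for $\Robs$ explicitly.
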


See \Cref{app:proof_affine_error_decomposition} for the proof.

The decomposition reveals an important distinction between the three reduction blocks. The output residual \(\Robs\) and source residual \(\Rsrcaff\) are exact quantities already expressed in spectral trace form, so their rank-constrained minimizing subspaces can be obtained directly. The medium residual \(\Cmedaff\), by contrast, is a conditional-approximation error and requires an additional Poincar\'e bound before an analogous spectral criterion becomes available.

\subsection{Spectral reduction criteria and medium Poincar\'e control}
\label{sec:medium_reduction_criteria}

To convert the trace residuals above into rank-constrained subspace selection rules, we use the following standard generalized-eigenvalue trace-minimization identity. Let \(\mathbf H\) be symmetric positive semidefinite and \(\mathbf R\) symmetric positive definite, and let
\(\mathbf H\bm\phi_i=\lambda_i\mathbf R\bm\phi_i\), with eigenvalues ordered nonincreasingly and eigenvectors chosen \(\mathbf R\)-orthonormal. Then~\cite[Prop.~2.6]{zahm2020gradient}
\begin{equation}\label{eq:trace_trailing_ev_equality}
\min_{\substack{P^2=P\\ \operatorname{rank}(P)=r}}
\tr\!\left(
    \mathbf R^{-1}(I-P^T)\mathbf H(I-P)
\right)
=
\sum_{i>r}\lambda_i,
\end{equation}
and the minimum is attained by the \(\mathbf R\)-orthogonal projector onto the dominant generalized eigenspace. Accordingly, whenever a projector is selected by a generalized eigenproblem below, we use this weighted-orthogonal representative.

Applying \eqref{eq:trace_trailing_ev_equality} with \((\mathbf H,\mathbf R)=(\Cy,I)\) shows that, at fixed output rank, the leading \(r_y\) eigenvectors of \(\Cy\), i.e., the output proper orthogonal decomposition (POD) modes, minimize the discarded output energy \(\Robs\). After \(\Py\) is fixed, applying the identity with \((\mathbf H,\mathbf R)=(\Hss(\Py),\Rs)\) shows that the dominant generalized eigenspace of \((\Hss(\Py),\Rs)\) minimizes \(\Rsrcaff\).

It remains to obtain a corresponding computable criterion for the medium block. For this purpose, we require the medium prior and metric \(\Rm\) to satisfy the following subspace Poincar\'e inequality, which connects medium truncation to derivative information.

\begin{assumption}[Medium subspace Poincar\'e inequality]
\label{ass:medium_poincare}
The SPD Poincar\'e metric matrix \(\Rm\) fixed in \Cref{sec:reduced_coordinates} is such that the medium prior
\(\priorm\) satisfies the following subspace Poincar\'e inequality:
for every continuously differentiable vector-valued \(h\) and every projector \(\Pm\),
\begin{equation*}
\begin{aligned}
&\Ex{\bvecm\sim\priorm}{
\norm{h(\bvecm)-\E{h(\bvecm)\mid \Pm\bvecm}}^2}
\\
&\qquad\le
\tr\!\left(
\Rm^{-1}(I-\Pm^T)
\Ex{\bvecm\sim\priorm}{
(\grad{\bvecm}h)^T(\grad{\bvecm}h)}
(I-\Pm)
\right).
\end{aligned}
\end{equation*}
\end{assumption}

For any fixed source vector \(v\in\mathbb R^{d_s}\), define the medium Jacobian
\begin{equation*}
    J_v(\bvecm)
    \coloneqq
    \nabla_{\bvecm}\bigl(\G(\bvecm)v\bigr)
    \in \mathbb R^{n_{\mathrm{obs}}\times d_m}.
\end{equation*}
By \Cref{ass:standing_assumptions}, the Jacobian quantities used below are square integrable.

To control the remaining medium term \(\Cmedaff\), recall that \(\Phis\) is the \(\Rs\)-orthonormal basis for the retained source subspace, with corresponding projector \(\Ps=\Phis\Phis^T\Rs\). Writing \(\Phis=[\bm\phi_1,\ldots,\bm\phi_{r_s}]\), define the affine medium diagnostic matrix
\begin{align}
\Hmmaff(\Py,\Ps)
&\coloneqq
\Ex{(\bvecm,\bvecs)\sim\prior}{
J_{\widehat{\bvecs}_{\mathrm{aff}}(\balpha)}(\bvecm)^T
\Py
J_{\widehat{\bvecs}_{\mathrm{aff}}(\balpha)}(\bvecm)}
\label{eq:Hmmaff_definition}\\
&=
\Ex{\bvecm\sim\priorm}{
J_{\bars}(\bvecm)^T\Py J_{\bars}(\bvecm)
+
\sum_{i=1}^{r_s}
J_{\bm\phi_i}(\bvecm)^T\Py J_{\bm\phi_i}(\bvecm)}.
\label{eq:Hmmaff_moment_expansion}
\end{align}
Thus \(\Hmmaff(\Py,\Ps)\) is the prior average of the medium Gauss--Newton Hessian (equivalently, Fisher information matrix) for the affine retained-source model. The second identity uses only
\(\E{\balpha}=0\) and
\(\E{\balpha\balpha^T}=I_{r_s}\); hence \(\Hmmaff(\Py,\Ps)\) depends on the source prior only through its mean, covariance, and retained source space.

\begin{proposition}[Affine medium bound and expected-KL control]
\label{prop:affine_trace_kl_bounds}
The affine medium residual satisfies
\begin{equation*}
\Cmedaff \le \Tmedaff \coloneqq
\tr\!\left( \Rm^{-1}(I-\Pm^T)
\Hmmaff(\Py,\Ps) (I-\Pm) \right).
\end{equation*}
Consequently,
\begin{equation}
\label{eq:affine_expected_kl_bound}
\Ex{\yobs}{
D_{\mathrm{KL}}\!\left(
\post\,\|\,\tilde{\pi}_{\mathrm{post}}^{\yobs}
\right)} \le
\frac12\left( \Robs+\Rsrcaff+\Tmedaff+\Rsurraff
\right).
\end{equation}
For fixed \(\Py\) and \(\Ps\), applying \eqref{eq:trace_trailing_ev_equality} to this bound shows that the leading generalized eigenspace of \((\Hmmaff(\Py,\Ps),\Rm)\) minimizes \(\Tmedaff\).
\end{proposition}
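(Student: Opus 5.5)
The plan has three parts: bound \(\Cmedaff\) by the Poincar\'e inequality of \Cref{ass:medium_poincare} applied pointwise in the retained source coordinates, then average over the source; combine this with \Cref{prop:affine_error_decomposition} and the data-free bound \eqref{eq:prior_predictive_kl_forward_bound}; and finally invoke \eqref{eq:trace_trailing_ev_equality}.

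\emph{Step 1: freeze the source.} By independence (\Cref{ass:standing_assumptions}(i)), I would condition on \(\bvecs\), so that \(v\coloneqq\widehat{\bvecs}_{\mathrm{aff}}(\balpha)=\bars+\Phis\balpha\) becomes a fixed vector. Define \(h_v(\bvecm)\coloneqq\Py\G(\bvecm)v\), which is continuously differentiable in \(\bvecm\). Since \(v\) is independent of \(\bvecm\) and \(\Py\) is linear, \(\E{h_v(\bvecm)\mid\Pm\bvecm}=\Py\Gopt(\Pm\bvecm)v\). Justifying this interchange of conditioning and multiplication by \(v\) is the one technical point, and I would handle it via Fubini and the integrability in \Cref{ass:standing_assumptions}(ii). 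Applying \Cref{ass:medium_poincare} to \(h_v\), with \(\nabla_{\bvecm}h_v=\Py J_v(\bvecm)\) and \(\Py^T\Py=\Py\), gives
\[
\Ex{\bvecm}{\|\Py(\G(\bvecm)-\Gopt(\Pm\bvecm))v\|^2}\le\tr\!\left(\Rm^{-1}(I-\Pm^T)\Ex{\bvecm}{J_v^T\Py J_v}(I-\Pm)\right).
\]
Integrating over \(\bvecs\sim\priors\), and using Tonelli together with linearity of the trace, yields \(\Cmedaff\le\Tmedaff\) with \(\Hmmaff\) exactly as defined in \eqref{eq:Hmmaff_definition}. Finiteness of \(\Hmmaff\) follows because \(J_v\) is linear in \(v\) and \(\balpha\) has identity second moment, which is the expansion \eqref{eq:Hmmaff_moment_expansion}.

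\emph{Step 2: the KL bound.} Taking \(\tilde{\Fwhite}=\tilde{\Fwhite}_{\mathrm{aff}}\) in \eqref{eq:prior_predictive_kl_forward_bound}, which is valid under the normalizing-constant hypothesis in \Cref{ass:standing_assumptions}(ii), the right-hand side is \(\tfrac12\) of the mean-square error. I would then substitute the exact split \eqref{eq:affine_direct_mse_split} and replace \(\Cmedaff\) by \(\Tmedaff\) to obtain \eqref{eq:affine_expected_kl_bound}.

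\emph{Step 3: optimal medium subspace.} For fixed \(\Py\) and \(\Ps\), the matrix \(\Hmmaff\) does not depend on \(\Pm\); it is symmetric positive semidefinite and \(\Rm\) is SPD. Applying \eqref{eq:trace_trailing_ev_equality} with \((\mathbf H,\mathbf R)=(\Hmmaff,\Rm)\) shows that the \(\Rm\)-orthogonal projector onto the leading generalized eigenspace attains the minimum \(\sum_{i>r_m}\lambda_i\).

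The only genuine obstacle is Step 1's identification of the conditional expectation and the legitimacy of applying the Poincar\'e inequality for each fixed \(v\) before averaging. Everything else is bookkeeping.
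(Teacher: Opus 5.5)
Your proposal is correct and follows the paper's proof essentially step for step. The paper likewise fixes the retained source coordinate, applies \Cref{ass:medium_poincare} to \(h_{\balpha}(\bvecm)=\Py\G(\bvecm)\widehat{\bvecs}_{\mathrm{aff}}(\balpha)\) after identifying its conditional mean as \(\Py\Gopt(\Pm\bvecm)\widehat{\bvecs}_{\mathrm{aff}}(\balpha)\), averages over the source prior to obtain \(\Tmedaff\), and then invokes \eqref{eq:prior_predictive_kl_forward_bound}, \eqref{eq:affine_direct_mse_split}, and \eqref{eq:trace_trailing_ev_equality}; freezing \(\bvecs\) instead of \(\balpha\), as you do, is immaterial since the integrand depends on the source only through \(\balpha\).
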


See \Cref{app:proof_affine_medium_control_kl} for the proof.

More generally, a matrix \(\Rm\) satisfying this Poincar\'e inequality need not be unique. Existence of such a metric is a genuine restriction on \(\priorm\) and requires sufficient concentration for the stated bound to hold. See \cite{zahm2020gradient,cui2021data} for sufficient conditions and examples beyond the Gaussian case.

The construction is therefore sequential rather than jointly optimal: first choose the output projector, then the source projector conditional on the output choice, and finally the medium projector conditional on the preceding two choices.

\section{SPPL Construction, Offline Algorithm, and Reduced-Space Inference}
\label{sec:practical_pipeline}

This section turns the preceding error decomposition into the SPPL construction. \Cref{tab:theory_to_algorithm} summarizes the sequential reduced-space choices and the associated training step.

\begin{table}[htbp]
\centering
\caption{Theory-to-algorithm map for the direct affine SPPL construction.
Each decomposition term motivates a selection criterion and corresponding construction step.}
\label{tab:theory_to_algorithm}
\small
\setlength{\tabcolsep}{4pt}
\renewcommand{\arraystretch}{1.05}
\begin{tabular}{@{}p{0.16\textwidth}
                    p{0.39\textwidth}
                    p{0.39\textwidth}@{}}
\toprule
\textbf{Error term}
&
\textbf{Selection criterion}
&
\textbf{Construction step}
\\
\midrule

\(\Robs\)
&
\(\Cy\)
&
Leading \(r_y\) output POD modes
\\

\(\Rsrcaff\)
&
\((\Hss(\Py),\Rs)\), given \(\Py\)
&
Top \(r_s\) source modes
\\

\(\Cmedaff\) (via \(\Tmedaff\))
&
\((\Hmmaff(\Py,\Ps),\Rm)\), given \(\Py,\Ps\)
&
Top \(r_m\) medium modes
\\

\(\Rsurraff\)
&
coefficient regression loss (defined below)
&
Train coefficient model
\\

\bottomrule
\end{tabular}
\end{table}

\subsection{SPPL architecture and coefficient-matrix targets}
\label{sec:sppl_architecture}

After the three bases are fixed, let \(\bmu=\Phim^*(\bvecm-\barm)\) and \(\balpha=\Phis^*(\bvecs-\bars)\). We define the SPPL surrogate as
\begin{equation}
\label{eq:sppl_affine_architecture}
\tilde{\Fwhite}_{\mathrm{aff}}(\Ptheta\btheta)
= \Phiy\Bigl(
\qsurr(\bmu;\bm{w})+
\Qsurr(\bmu;\bm{w})\,\balpha
\Bigr).
\end{equation}
We parameterize \((\qsurr,\Qsurr)\) by a network \(\mathcal N_{\bm w}\). The network receives only \(\bmu\in\mathbb R^{r_m}\) and returns
\( [\qsurr(\bmu),\Qsurr(\bmu)] \in\mathbb R^{r_y\times(r_s+1)}. \)
The retained source coordinates \(\balpha\) enter analytically outside the network through the affine combination in \eqref{eq:sppl_affine_architecture}. Thus the trainable map models only medium dependence, while affine dependence on the retained source coordinates is enforced by construction.

The corresponding computable medium-only coefficient targets are
\begin{equation}
\label{eq:affine_targets_medium_only}
\qproj(\bvecm)
\coloneqq
\Phiy^T\Gm\bars,
\qquad
\Qproj(\bvecm)
\coloneqq
\Phiy^T\Gm\Phis.
\end{equation}
Thus \(\qproj\) is the projected response to the source-prior mean, while the columns of \(\Qproj\) are the projected responses to the retained source basis directions. These are the medium-only regression targets used to train the SPPL network. Although these targets are evaluated at the full medium realization \(\bvecm\), the network receives only its retained coordinate \(\bmu\). It is therefore not expected to reproduce \(\qproj(\bvecm)\) and \(\Qproj(\bvecm)\) pointwise for all medium realizations sharing the same \(\bmu\); under the squared coefficient loss introduced below, the population-optimal predictors are instead their conditional expectations given \(\bmu\).

\subsection{Coefficient loss and population identity}
\label{sec:coefficient_targets_training}

To learn from these full-medium coefficient targets using only the retained coordinate \(\bmu\), we define the following loss for a trainable pair \((\qsurr,\Qsurr)\):
\begin{equation*}
\mathcal{L}_{\mathrm{coeff}}(\bm{w})
\coloneqq
\Ex{\bvecm\sim\priorm}{
\left\|\qsurr(\bmu;\bm{w})-\qproj(\bvecm)\right\|^2
+
\left\|\Qsurr(\bmu;\bm{w})-\Qproj(\bvecm)\right\|_{\mathrm{F}}^2 }.
\end{equation*}

Define the conditional mean source-to-observable map by
\[
\Gopt_r(\bmu)
\coloneqq
\E{\G(\bvecm)\mid\Phim^*(\bvecm-\barm)=\bmu}.
\]
The following result shows that the regression automatically averages over the discarded medium directions and separates this unavoidable medium-reduction error from trainable surrogate error.

\begin{proposition}[Coefficient-loss identity]
\label{prop:coefficient_loss_identity}
Define the optimal reduced coefficient functions
\begin{equation*}
\qsurr^*(\bmu)
\coloneqq
\Phiy^T\Gopt_r(\bmu)\bars,
\qquad
\Qsurr^*(\bmu)
\coloneqq
\Phiy^T\Gopt_r(\bmu)\Phis.
\end{equation*}
For the SPPL surrogate
\[
\tilde{\Fwhite}_{\mathrm{aff}}(\Ptheta\btheta)
=
\Phiy\bigl(\qsurr(\bmu;\bm w)+\Qsurr(\bmu;\bm w)\balpha\bigr),
\]
the expected coefficient loss satisfies
\begin{equation}
\label{eq:coefficient_loss_decomposition}
\mathcal{L}_{\mathrm{coeff}}(\bm w)
=
\Rsurraff(\bm w)+\Cmedaff,
\end{equation}
where, equivalently to \eqref{eq:Cmed_aff_equiv}, the same medium residual can be written as
\begin{equation}
\label{eq:Cmed_aff_def}
\Cmedaff
=
\Ex{\bvecm\sim\priorm}{
\left\|\Phiy^T\left(\Gm-\Gopt(\bvecm_r)\right)\bars\right\|^2
+
\left\|\Phiy^T\left(\Gm-\Gopt(\bvecm_r)\right)\Phis\right\|_{\mathrm F}^2 }.
\end{equation}
Since \(\Cmedaff\) is independent of \(\bm w\), minimizing \(\mathcal L_{\mathrm{coeff}}\) is equivalent to minimizing \(\Rsurraff\).
\end{proposition}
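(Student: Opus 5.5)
The approach is a Pythagorean (orthogonal-projection) argument in $L^2_{\priorm}$, applied entrywise to the stacked coefficient targets. First, note that $\Gopt_r(\bmu)=\Gopt(\bvecm_r)$ almost surely, because $\sigma(\Pm\bvecm)=\sigma(\bmu)$ and $\bvecm_r=\Pm\barm+\Phim\bmu$. Since $\bars$, $\Phis$ and $\Phiy$ are deterministic, conditional expectation commutes with the linear maps $X\mapsto\Phiy^T X\bars$ and $X\mapsto\Phiy^T X\Phis$. Hence
\[
\qsurr^*(\bmu)=\E{\qproj(\bvecm)\mid\bmu},\qquad
\Qsurr^*(\bmu)=\E{\Qproj(\bvecm)\mid\bmu}.
\]
Square integrability of these quantities follows from \Cref{ass:standing_assumptions}(ii).

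Next, split each term of the loss around the conditional mean. For instance,
\[
\qsurr(\bmu;\bm w)-\qproj(\bvecm)=\bigl(\qsurr(\bmu;\bm w)-\qsurr^*(\bmu)\bigr)+\bigl(\qsurr^*(\bmu)-\qproj(\bvecm)\bigr),
\]
and likewise for $\Qsurr$. The first bracket is $\sigma(\bmu)$-measurable, and the second has zero conditional mean given $\bmu$. The cross term therefore vanishes by the tower property; this requires the network outputs to be square integrable, which I take as part of the admissible class. This yields
\[
\mathcal L_{\mathrm{coeff}}(\bm w)=\Ex{\priorm}{\|\qsurr-\qsurr^*\|^2+\|\Qsurr-\Qsurr^*\|_{\rm F}^2}+\Ex{\priorm}{\|\qsurr^*-\qproj\|^2+\|\Qsurr^*-\Qproj\|_{\rm F}^2}.
\]
The second expectation is exactly \eqref{eq:Cmed_aff_def}.

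It remains to identify the two pieces with $\Rsurraff$ and with \eqref{eq:Cmed_aff_equiv}. This is the step I expect to need the most care. By \Cref{thm:optimal_affine_in_source},
\[
\tilde{\Fwhite}_{\mathrm{aff}}^*=\Py\Gopt(\bvecm_r)(\bars+\Phis\balpha)=\Phiy\bigl(\qsurr^*(\bmu)+\Qsurr^*(\bmu)\balpha\bigr).
\]
Therefore
\[
\tilde{\Fwhite}_{\mathrm{aff}}^*-\tilde{\Fwhite}_{\mathrm{aff}}=\Phiy\bigl(\delta q(\bmu)+\delta Q(\bmu)\balpha\bigr),
\qquad \delta q=\qsurr^*-\qsurr,\quad \delta Q=\Qsurr^*-\Qsurr.
\]
Because $\Phiy^T\Phiy=I$, the map $\Phiy$ preserves norms. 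Squaring and taking expectations then uses three facts: the independence of $\bvecm$ and $\bvecs$ (\Cref{ass:standing_assumptions}(i)), which makes $\bmu$ independent of $\balpha$; $\E{\balpha}=0$; and $\E{\balpha\balpha^T}=I_{r_s}$. Conditioning on $\bmu$, the cross term $2\,\delta q^T\delta Q\,\E{\balpha}$ drops out, and $\E{\balpha^T\delta Q^T\delta Q\balpha\mid\bmu}=\tr(\delta Q^T\delta Q)=\|\delta Q\|_{\rm F}^2$. This gives
\[
\Rsurraff(\bm w)=\Ex{\priorm}{\|\delta q\|^2+\|\delta Q\|_{\rm F}^2},
\]
which matches the first piece.

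The same computation, with $\Phiy^T(\Gm-\Gopt(\bvecm_r))$ in place of $(\delta q,\delta Q)$ acting on $\bars+\Phis\balpha$, shows that \eqref{eq:Cmed_aff_equiv} equals \eqref{eq:Cmed_aff_def}. Here $\|\Py v\|=\|\Phiy^T v\|$, and the independence of $\bvecm$ from $\balpha$ is used once more. This establishes the stated equivalence. Finally, $\Cmedaff$ involves only $\G$, the bases and the prior, not $\bm w$, so minimizing $\mathcal L_{\mathrm{coeff}}$ over $\bm w$ is equivalent to minimizing $\Rsurraff$.
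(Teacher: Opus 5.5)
Your proposal is correct and follows essentially the same route as the paper. Both arguments split the coefficient loss by conditional Pythagoras around $\E{\qproj(\bvecm)\mid\bmu}$ and $\E{\Qproj(\bvecm)\mid\bmu}$, then use independence together with $\E{\balpha}=0$ and $\E{\balpha\balpha^T}=I_{r_s}$ to identify the first piece with $\Rsurraff$ and to show that the two forms of $\Cmedaff$ agree. Your explicit remarks that $\Gopt_r(\bmu)=\Gopt(\bvecm_r)$ almost surely, and that the network outputs must be square integrable, make precise two points the paper leaves implicit.
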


See \Cref{app:proof_expected_coeff_loss} for the proof.

\subsection{Empirical coefficient training}

Given medium samples \(\bvecm_1,\dots,\bvecm_{N_m}\sim\priorm\), we use the empirical loss
\begin{equation}
\label{eq:empirical_coefficient_loss}
\widehat{\mathcal{L}}_{\mathrm{coeff}}(\bm{w})
=
\frac{1}{N_m}
\sum_{j=1}^{N_m}
\left(
\left\|\qsurr(\bmu_j;\bm{w})-\qproj_j\right\|^2
+
\left\|\Qsurr(\bmu_j;\bm{w})-\Qproj_j\right\|_{\mathrm{F}}^2
\right),
\end{equation}
with
\begin{equation*}
\bmu_j=\Phim^*(\bvecm_j-\barm),
\qquad
\qproj_j=\Phiy^T\G(\bvecm_j)\bars,
\qquad
\Qproj_j=\Phiy^T\G(\bvecm_j)\Phis.
\end{equation*}
The unweighted Frobenius loss is appropriate because \(\E{\balpha}=0\) and \(\E{\balpha\balpha^T}=I_{r_s}\). The same loss applies in the Gaussian source case, where \Cref{cor:gaussian_affine_restriction_exactness} identifies the optimal affine-in-source approximation with the optimal blockwise ridge approximation. This is the empirical objective used to train the coefficient model. We next show how the PDE solves needed to assemble both the medium diagnostic and these coefficient targets can be reused efficiently.

\subsection{PDE-operator factorization reuse for the medium diagnostic and coefficient targets}
\label{sec:factorization_reuse}

Having defined the coefficient targets and both the population and empirical training objectives, we now show how the quantities needed for medium-space construction and coefficient training can be assembled efficiently. The medium-reduction step requires derivatives of the projected response to the source mean and retained source directions, while coefficient training requires the corresponding projected response values themselves. Both sets of quantities can be computed by reusing a single PDE-operator factorization for multiple forward and adjoint right-hand sides.

Let \(\bm\phi_i\), \(i=1,\ldots,r_s\), denote the columns of \(\Phis\), set \(\bm\phi_0\coloneqq\bars\), and let \(\bm\phi_{y,k}\), \(k=1,\ldots,r_y\), denote the columns of \(\Phiy\). Define the projected response coefficients
\begin{equation*}
 c_0(\bvecm)\coloneqq \Phiy^T\Gm\bars,
 \qquad
 c_i(\bvecm)\coloneqq \Phiy^T\Gm\bm\phi_i,
 \quad i=1,\ldots,r_s.
\end{equation*}
Because \(\Py=\Phiy\Phiy^T\), the affine medium diagnostic matrix from \eqref{eq:Hmmaff_moment_expansion} can be written directly as
\begin{equation}
\label{eq:Hmmaff_projected_coefficient_jacobians}
\Hmmaff(\Py,\Ps)
=
\Ex{\bvecm\sim\priorm}{
(\grad{\bvecm}c_0)^T(\grad{\bvecm}c_0)
+
\sum_{i=1}^{r_s}
(\grad{\bvecm}c_i)^T(\grad{\bvecm}c_i)
}.
\end{equation}
Thus the projected coefficient Jacobians provide exactly the derivative information needed for the medium reduced space.

For fixed \(\bvecm\), let \(\bm x_i\), \(i=0,\ldots,r_s\), and \(\bm p_k\), \(k=1,\ldots,r_y\), be the forward and adjoint states defined by
\begin{equation*}
\mathcal{L}(\bvecm)\bm x_i=\mathcal S\bm\phi_i,
\qquad i=0,\ldots,r_s,
\qquad
\mathcal{L}(\bvecm)^*\bm p_k
=
\mathcal B^*\,(\Signoise^{-1/2})^T\bm\phi_{y,k}
=
\mathcal B^*\,\Signoise^{-1/2}\bm\phi_{y,k}.
\end{equation*}
Since \(\mathcal S\) is independent of \(\bvecm\), the components of the projected coefficient Jacobians satisfy
\begin{align}
\label{eq:c0_medium_gradient}
\grad{\bvecm}(c_0)_k
&=
-\bm p_k^T
\left(
\grad{\bvecm}\mathcal L(\bvecm)
\right)\bm x_0,
\\
\label{eq:ci_medium_gradient}
\grad{\bvecm}(c_i)_k
&=
-\bm p_k^T
\left(
\grad{\bvecm}\mathcal L(\bvecm)
\right)\bm x_i,
\qquad i=1,\ldots,r_s.
\end{align}
Thus, for a general source prior, assembling \(\Hmmaff\) requires \(r_s+1\) forward solves and \(r_y\) adjoint solves per medium realization, all reusing a single factorization of \(\mathcal L(\bvecm)\). If \(\bars=0\), then \(\bm x_0=0\), \(c_0=0\), and \(\nabla_{\bvecm}c_0=0\), so the mean-source solve is omitted and the count reduces to \(r_s\) forward and \(r_y\) adjoint solves. The same forward solves provide the reduced coefficient values in \eqref{eq:affine_targets_medium_only}: \(\qproj=c_0\), while the \(i\)-th column of \(\Qproj\) is \(c_i\). Thus the same factorization-reuse workflow supplies both the derivative information needed to construct the medium space and the coefficient targets needed for training. This completes the offline construction and training workflow; we next describe reduced-space posterior inference.

\subsection{Reduced-space approximate posterior and inference for Gaussian priors}
\label{sec:sppl_reduced_posterior_sampling}

When the medium and source priors are Gaussian, let
\(\priorm=\N{\barm}{\bm C_m}\) and
\(\priors=\N{\bars}{\bm C_s}\), and use the canonical choices
\(\Rm=\bm C_m^{-1}\) and \(\Rs=\bm C_s^{-1}\). Then the retained coordinates
\[
\bmu=\Phim^*(\bvecm-\barm),
\qquad
\balpha=\Phis^*(\bvecs-\bars)
\]
are independent standard Gaussian vectors under the prior. Because the SPPL likelihood depends only on these retained coordinates, posterior inference can be performed directly in \((\bmu,\balpha)\). We define the resulting reduced-space approximate posterior by
\begin{equation}
\label{eq:sppl_reduced_posterior}
\pi_{\mathrm{SPPL},r}^{\yobs}(\bmu,\balpha)
\propto
\exp\!\left[
-\frac12
\left\|
\Phiy\bigl(\qsurr(\bmu)+\Qsurr(\bmu)\balpha\bigr)-\yobs
\right\|^2
\right]
\exp\!\left[
-\frac12\|\bmu\|^2-\frac12\|\balpha\|^2
\right].
\end{equation}
When physical-space posterior summaries are required, the discarded components can be reconstructed by drawing them from their conditional Gaussian prior given the retained coordinates. Thus online MCMC requires only evaluations of the reduced coefficient model and the analytic affine source action. The coseismic study in \Cref{sec:coseismic_protocol} uses this reduced-space formulation.

\section{Numerical Experiments}
\label{sec:numerical_experiments}
We consider numerical experiments that aim to address four questions. First, do the blockwise diagnostic matrices yield competitive projected approximations at matched retained dimension? Second, do the direct affine criteria remain effective for a non-Gaussian source prior? Third, on fixed block-reduced coordinates, what gains arise from coefficient-matrix supervision and from enforcing affine source dependence in the surrogate architecture? Finally, how faithfully do the resulting surrogates reproduce the posterior, and how robust are they to changes in source scale? 
To this end, we present two sets of numerical experiments: quadratic benchmarks that address the first two questions and a coseismic fault-slip inverse problem that addresses the latter two.

\subsection{Quadratic benchmarks}
\label{sec:toy_validation}
First, we consider an analytic forward map that allows us to compare in detail the blockwise and joint reduced-coordinate constructions at matching retained dimensions 
along with the direct affine errors and their medium spectral bounds in the non-Gaussian setting.

Here, the forward map takes the following form:
\begin{equation}
\label{eq:quadratic_benchmark_map}
\begin{aligned}
\Fwhite(\bvecm,\bvecs)
&=\G(\bvecm)\bvecs,\\
\G(\bvecm)
&=
\mathbf A_0
+\sum_{i=1}^{d_m}m_i\mathbf A_i
+\sum_{i,j=1}^{d_m}m_i m_j\mathbf B_{ij},
\qquad
\mathbf B_{ij}=\mathbf B_{ji},
\end{aligned}
\end{equation}
which is linear in the source and quadratic in the medium.
For this map, the required conditional moments and blockwise affine error terms are available analytically. 
We consider choices of the coefficients $\mathbf{A}_0, \mathbf{A}_i, \mathbf{B}_{ij}$
that correspond to regimes with different contributions of the source and medium inputs. 
The coefficient-generation procedure and parameter regimes are given in \Cref{app:quadratic_benchmark_details}.

We compare four matched basis constructions that form a \(2\times2\) comparison: block-separable versus unrestricted joint reduction, and prior-mean versus prior-averaged derivative information. 
All experiments use \(d_m=d_s=n_{\mathrm{obs}}=10\) and no output reduction, so \(\Py=I\). 
Although \(\Py=I\) in these benchmarks, we retain \(\Py\) in the formulas below to match the theory notation. Recall that \(J_{\bvecs}(\bvecm)=\nabla_{\bvecm}(\Gm\bvecs)\) denotes the medium Jacobian evaluated at the current source. Let
\(\bar{\btheta}=(\barm,\bars)\),
\[
J_{\btheta}(\bvecm,\bvecs)
\coloneqq
\grad{\btheta}\Fwhite(\bvecm,\bvecs)
=
\begin{bmatrix}J_{\bvecs}(\bvecm)&\Gm\end{bmatrix},
\qquad
\bm R_{\btheta}=\diag(\Rm,\Rs).
\]
We compare diagnostic matrices built either from a prior-mean Gauss--Newton Hessian (PM-GNH) or from data-free likelihood-informed-subspace (DF-LIS) averaging:
\begin{align*}
\mathbf H_{ss}^{\mathrm{PM\text{-}GNH}}
&=\G(\barm)^T\Py\G(\barm),
&
\mathbf H_{mm}^{\mathrm{PM\text{-}GNH}}
&=J_{\bars}(\barm)^T\Py J_{\bars}(\barm),
\\
\mathbf H_{ss}^{\mathrm{DF\text{-}LIS}}
&=\Hss(\Py),
&
\mathbf H_{mm}^{\mathrm{DF\text{-}LIS}}
&=\Hmmaff(\Py,\Ps),
\\
\mathbf H_{\btheta}^{\mathrm{PM\text{-}GNH}}
&=J_{\btheta}(\barm,\bars)^T\Py J_{\btheta}(\barm,\bars),
&
\mathbf H_{\btheta}^{\mathrm{DF\text{-}LIS}}
&=\Ex{(\bvecm,\bvecs)\sim\prior}{J_{\btheta}^T\Py J_{\btheta}}.
\end{align*}
Block prior-mean GNH and Block DF-LIS use the dominant generalized eigenvectors of the
corresponding source and medium pairs \((\mathbf H_{ss},\Rs)\) and
\((\mathbf H_{mm},\Rm)\), whereas Joint prior-mean GNH and Joint DF-LIS use the
dominant generalized eigenvectors of \((\mathbf H_{\btheta},\bm R_{\btheta})\).
Thus the blockwise constructions are block-separable, while the joint constructions may
mix medium and source directions.

To compare blockwise and joint constructions under a common retained-dimension budget while allowing Block DF-LIS to determine the allocation between source and medium directions, we use the following rank-matching rule. 
We first request the source ranks to be \(r_s\in\{1,2,4,8\}\). For each \(r_s\), a cutoff
\(\tau\) is chosen from the Block DF-LIS source spectrum so that exactly \(r_s\)
source eigenvalues exceed \(\tau\); applying the same cutoff to the resulting
Block DF-LIS medium spectrum determines \(r_m\). Both blockwise methods use this common pair \((r_s,r_m)\). Joint DF-LIS uses the full matched dimension \(r_s+r_m\). For Joint prior-mean GNH, however, the diagnostic matrix is formed from the Jacobian evaluated at the prior mean. Since \(n_{\mathrm{obs}}=10\), this Jacobian has rank at most 10, and hence the corresponding Gauss--Newton Hessian has at most 10 nonzero generalized eigenvalues. We therefore cap the retained joint rank at \(\min(r_s+r_m,10)\), rather than adding directions from its nullspace. Consequently, once the requested matched dimension \(r_s+r_m\) reaches 10, the Joint prior-mean GNH subspace no longer changes, and its normalized total reduction error remains constant as the requested dimension is increased further.

Throughout the quadratic benchmarks, the normalized total error of an
approximation \(\tilde{\Fwhite}\) is the prior-averaged squared forward error
\(\Ex{\prior}{\|\Fwhite-\tilde{\Fwhite}\|^2}\) normalized by
\(\Ex{\prior}{\|\Fwhite\|^2}\); decomposition terms use the same normalization.
The coefficient-generation procedure, regime parameters, exact conditional
formulas, and Monte Carlo evaluation details are given in
\Cref{app:quadratic_benchmark_details,app:toy_gmm_joint_benchmark}.

\subsubsection{Gaussian source prior: exactness of the affine restriction}
\label{sec:toy_gaussian}

We consider source-dominated, approximately balanced, and medium-dominated
regimes, generated by varying the source-mean amplitude and the relative strength of the constant operator component; \Cref{app:quadratic_benchmark_details} gives the corresponding parameter choices. For Gaussian sources, the affine restriction introduces no additional error within the retained block
spaces, so the reported error isolates reduction by the selected bases.
\begin{figure}[htbp]
    \centering
    \includegraphics[width=\textwidth]{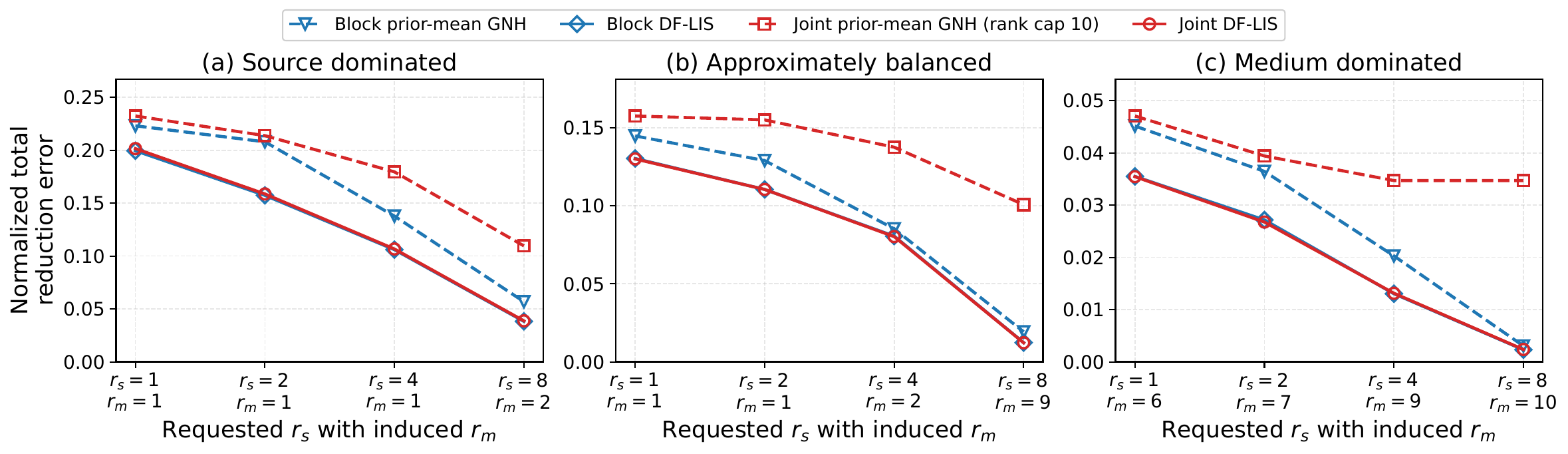}
    \caption{Gaussian-source quadratic benchmark across three regimes: source-dominated, approximately balanced, and medium-dominated. Each panel reports normalized total reduction error versus requested source rank \(r_s\) for Block prior-mean GNH, Block DF-LIS, Joint prior-mean GNH, and Joint DF-LIS at common requested dimension budgets. Joint prior-mean GNH is capped at rank 10.}
    \label{fig:toy_gaussian_three_regimes}
\end{figure}

Figure~\ref{fig:toy_gaussian_three_regimes} shows the truncation errors for the different basis constructions. It is observed that Block DF-LIS closely matches Joint DF-LIS at matched retained dimension across all three regimes, while both prior-mean GNH constructions incur consistently larger error. Thus block-separable reduction loses little relative to unrestricted joint DF-LIS here, whereas prior averaging of derivative information is important for basis quality.

\subsubsection{Gaussian-mixture source prior: direct affine error and spectral control}
\label{sec:toy_gmm}

We next repeat the three regimes using a two-component Gaussian-mixture source prior; \Cref{app:quadratic_benchmark_details} gives the construction and parameters. For this non-Gaussian prior, the optimal retained-source approximation is not affine. Accordingly, the blockwise methods are evaluated using the optimal affine-in-source approximation on their selected block subspaces, whereas the joint methods are evaluated using the exact conditional expectation given their retained joint coordinates. Thus the joint methods are allowed a more general approximation than the blockwise methods.

Figure~\ref{fig:toy_gmm_three_regimes} shows that Block DF-LIS remains competitive with Joint DF-LIS under the Gaussian-mixture source prior. The computable source-plus-medium bound \(\Rsrcaff+\Tmedaff\) also tracks the exact contribution \(\Rsrcaff+\Cmedaff\) closely, supporting the direct affine criteria beyond the Gaussian setting.

\begin{figure}[htbp]
    \centering
    \includegraphics[width=\textwidth]{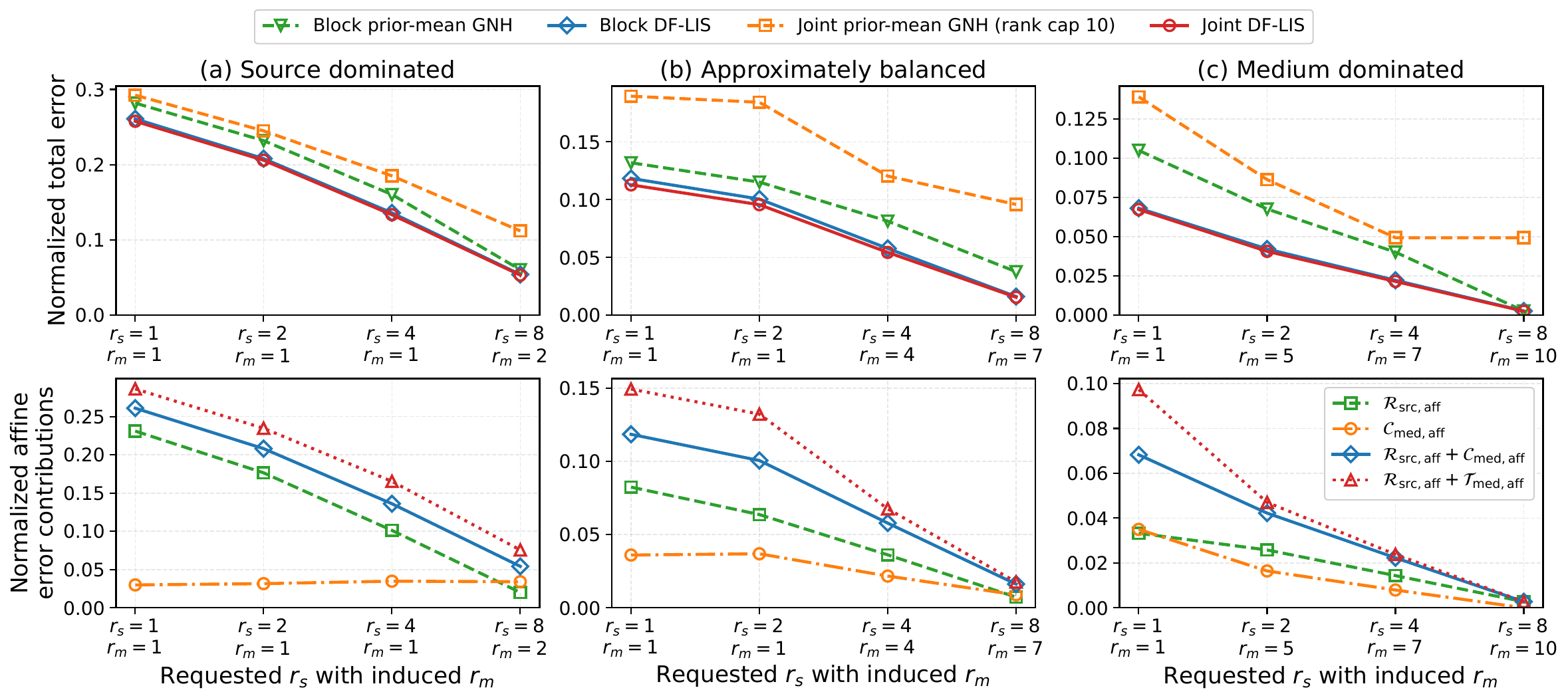}
    \caption{Gaussian-mixture-source quadratic benchmark across three regimes.
    Top: normalized total error for Block prior-mean GNH, Block DF-LIS, Joint prior-mean GNH,
    and Joint DF-LIS at matched retained dimension. Bottom: normalized
    \(\Rsrcaff\), \(\Cmedaff\), their sum, and the bound
    \(\Rsrcaff+\Tmedaff\) for Block DF-LIS.}
    \label{fig:toy_gmm_three_regimes}
\end{figure}

\subsection{Application study: surrogate supervision and source parameterization in coseismic fault-slip inversion}
\label{sec:coseismic_fault}

We now test the full SPPL pipeline in a PDE-constrained Bayesian inversion problem using a three-model surrogate ablation under a matched offline PDE budget. All three surrogates use the same block-reduced output, medium, and source coordinates. The study therefore focuses on two surrogate-design choices, coefficient-matrix versus source-probe supervision and structure-preserving affine-in-source versus generic nonlinear parameterization. 
The surrogate models are evaluated in terms of their prediction accuracy, posterior fidelity, and online sampling efficiency. 

We also evaluate the out-of-distribution performance of the surrogate-design choices.
This is particularly relevant to Bayesian inverse problems, as the posterior may concentrate in regions of the parameter space to which the prior assigns little mass, e.g., when the data are informative or when the prior is misspecified.
In such cases, surrogates trained using the prior distribution may yield poor approximations in posterior-relevant regions \cite{li2014adaptive_surrogates_bip,yan2020adaptive_dnn_bip}. 
However, by explicitly incorporating the affine source dependence, SPPL surrogates may be able to accommodate distributional shifts that occur in the source. 
To investigate this behavior, we compare the performance of the surrogate models in the context of a miss-specification in variance of the source prior distribution.

\subsubsection{Physical model and discretization}
\label{sec:coseismic_model}

We use the two-dimensional weakly symmetric Hellinger--Reissner elasticity formulation and fault treatment of \cite{Puel22}, together with the heterogeneous-material/fault-slip joint-inversion setting of \cite{puel2024adjoint_joint}, closely following the coseismic application setting of \cite{Puel_2024}. The medium field controls the shear modulus and hence the elasticity operator, while the fault-slip field prescribes the tangential displacement jump and enters as a linear source. Let \(\bvecm\) denote the finite-element coefficients of the medium parameter field \(m\), which parameterizes the spatially varying shear modulus, and let \(\bvecs\) denote the finite-element coefficients of the fault-slip field \(s\). After finite-element discretization, the mixed state equation and whitened parameter-to-observable map have the form
\[
\mathbf K(\bvecm)\bm x=\mathbf S\bvecs,
\qquad
\Fwhite(\bvecm,\bvecs)
=
\Signoise^{-1/2}\mathcal B\mathbf K(\bvecm)^{-1}\mathbf S\bvecs
=
\G(\bvecm)\bvecs,
\]
where \(\mathbf K(\bvecm)\) is the medium-dependent discrete elasticity operator and \(\mathbf S\) is the fixed operator that injects fault slip into the state equation. Thus the forward map is nonlinear in the medium parameters \(\bvecm\) and linear in the fault-slip parameters \(\bvecs\), giving the partially linear structure studied above.

The domain is a \(500\times250\) km vertical cross-section with a dipping fault and 20 two-component surface GPS stations, so \(n_{\mathrm{obs}}=40\). The discretization has state dimension \(145083\), medium dimension \(d_m=3101\), and slip dimension \(d_s=29\), and the observation-noise standard deviation is \(0.015\). The implementation uses hIPPYlib \cite{villa2018hippylib,villa2021hippylib_toms}. We use independent zero-mean Gaussian priors, taking the medium prior precision as the canonical Poincar\'e metric matrix \(\Rm\) and the source prior precision matrix as \(\Rs\). Thus, \(\barm=\bars=0\).

Noise-free synthetic observations are generated by a forward solve at the prescribed truth \((m^\dagger,s^\dagger)\); the likelihood nevertheless uses the prescribed nonzero observation-noise covariance. We denote the resulting nominal whitened data by \(\yobs_1=\Fwhite(m^\dagger,s^\dagger)\). The full elasticity equations, fault conditions, compliance law, discretization, priors, truth fields, and synthetic-data construction are collected in \Cref{app:coseismic_details}.

\subsubsection{Three-model ablation: separating source supervision from surrogate parameterization}
\label{sec:coseismic_protocol}

We compare the performance of our proposed SPPL surrogate with alternative surrogate architectures that use the same reduced spaces to highlight the effect of explicitly preserving the affine source dependence. To this end, we consider three approaches: DIPNet, Matrix-loss SPPL, and Probe-loss SPPL.

Here, DIPNet~\cite{oleary2022derivative} serves as a generic projected neural-surrogate baseline. In the common retained coordinates, DIPNet takes both parameter blocks as network inputs and directly predicts the reduced observable,
\[
\mathcal N_{\mathrm{DIPNet}}:\mathbb R^{r_m+r_s}\to\mathbb R^{r_y},
\qquad
\tilde{\Fwhite}_{\mathrm{DIPNet}}(\bmu,\balpha)
=\Phiy\,\mathcal N_{\mathrm{DIPNet}}(\bmu,\balpha).
\]
Thus, unlike SPPL, DIPNet does not impose affine dependence on \(\balpha\); it learns the joint dependence on the retained medium and source coordinates from data. Here \(r_m=r_s=12\) and \(r_y=17\), so the network maps \(\mathbb R^{24}\) to \(\mathbb R^{17}\). For each medium realization \(\bvecm_j\), we draw \(r_s\) independent source probes \(\bvecs_{jk}\), form \(\balpha_{jk}=\Phis^*(\bvecs_{jk}-\bars)\), and compute the reduced-output targets \(\bm z_{jk}=\Phiy^T\Fwhite(\bvecm_j,\bvecs_{jk})\). DIPNet is trained on the pairs \((\bmu_j,\balpha_{jk})\mapsto\bm z_{jk}\) and must therefore learn the source dependence from sampled probes.

Matrix-loss SPPL is the coefficient-supervised model developed in \Cref{sec:coefficient_targets_training}. It learns the reduced 
affine offset \(\qproj(\bvecm)=\Phiy^T\Gm\bars\) and 
coefficient matrix \(\Qproj_j=\Phiy^T\G(\bvecm_j)\Phis\), preserving the linear source action by construction. Because this gives SPPL direct supervision of the complete map on the retained source space, a direct comparison with DIPNet would change both the architecture and the training supervision.

To address this, we introduce Probe-loss SPPL as an intermediate comparator. It uses the same affine-in-source architecture as Matrix-loss SPPL but is trained on exactly the same realized probe--output pairs as DIPNet. Thus Probe-loss SPPL versus DIPNet isolates the effect of the affine source parameterization, while Matrix-loss versus Probe-loss SPPL isolates the effect of direct coefficient-matrix supervision. Under the standing source-moment assumptions, the two SPPL losses have the same population minimizer; the empirical Probe-loss objective and derivation are given in \Cref{app:probe_loss_population}.

The ablation then compares DIPNet, Matrix-loss SPPL, and Probe-loss SPPL at training budgets \(N_m\in\{100,400,1600,6400\}\). 
At each budget, the models use the same medium realizations, the same output, medium, and source spaces, and the same number of PDE right-hand-side solves per medium realization. 
The block reduced spaces are constructed by the sequential SPPL criteria in \Cref{sec:medium_reduction_criteria}. Retaining modes with eigenvalue \(\lambda_i>1.0\) gives \(r_y=17\) and \(r_m=r_s=12\). 
The two SPPL variants share one fixed network architecture, while DIPNet uses the corresponding generic projected-network architecture; no extensive hyperparameter sweeps are performed.
Because the source prior is zero-mean, \(\bars=0\), the affine offset target in the SPPL model is \(\qproj(\bvecm)=\Phiy^T\Gm\bars=0\). Accordingly, the two SPPL models set \(\qsurr\equiv0\) and learn only the coefficient-matrix map \(\Qsurr\).

\paragraph{Posterior accuracy evaluation protocol}
For the Gaussian-prior coseismic experiments, surrogate chains sample their corresponding posteriors directly in the common retained coordinates \((\bmu,\balpha)\), with \eqref{eq:sppl_reduced_posterior} giving the SPPL case, using MAP-based likelihood-informed pCN (LA-pCN)~\cite{cui2016dimension,cotter2013mcmc}; the high-fidelity (HF) reference chains use the PDE model. Unlike the prior-predictive KL bound in the theory, the numerical posterior comparisons use the fixed-data forward KL divergence \(D_{\mathrm{KL}}(\post\,\|\,\tilde{\pi}_{\mathrm{post}}^{\yobs})\). For each datum, this quantity is estimated from samples of the corresponding HF posterior by evaluating the HF and surrogate posterior potentials at the same samples; the explicit estimator and convergence diagnostics are given in \Cref{app:coseismic_evaluation_protocol}.

\paragraph{Source-scale robustness evaluation protocol}

To test robustness to changes in the scaling of the source prior, 
we consider a set of inverse problems that are obtained by rescaling the true source and data, i.e.,
\[
c\in\left\{1/10,1/\sqrt{10},1,\sqrt{10},10\right\},
\qquad
s_c^\dagger=c\,s^\dagger,
\qquad
\yobs_c=c\,\yobs_1,
\]
with the true medium, priors, and likelihood noise covariance unchanged. 
This is analogous to rescaling prior variance for the source.
A separate HF chain targets \(\pi_{\mathrm{post}}^{\yobs_c}\) for each test scale \(c\) and provides reference samples from that posterior.
The surrogate models, whose training sets correspond to the source scaling $c_{\mathrm{train}} = 1$, are then evaluated using the fixed-data KL diagnostic on the posteriors with $c \in\left\{1/10,1/\sqrt{10},1,\sqrt{10},10\right\}$.
For additional comparisons, we also train DIPNets on rescaled training sets, i.e., \(c_{\mathrm{train}}\in\{1/10,1,10\}\),
as well as the union of all three scale-specific training sets.
This final combined model uses three times the probe data and is included only as a robustness comparator, not in the matched-cost three-model ablation. The forward-level source-scale diagnostic additionally spans all training budgets and includes both SPPL variants. All models use the same reduced coordinates; training details are given in \Cref{app:network_training_details}.

\paragraph{Online efficiency diagnostics}
Online efficiency is reported through forward-evaluation cost, wall-clock cost per LA-pCN step, and integrated autocorrelation time (IACT); convergence details are given in \Cref{app:coseismic_evaluation_protocol}.

\subsubsection{Numerical results}

\paragraph{Reference inversion and MAP reconstruction}
Figure~\ref{fig:map_visualization} shows (i) the physical shear-modulus field corresponding to the prescribed medium truth and the physical shear-modulus field corresponding to its high-fidelity MAP estimate, (ii) the true displacement field and surface observation locations, and (iii) the true and MAP fault-slip profiles.

\begin{figure}[htbp]
    \centering
    \includegraphics[width=0.9\linewidth]{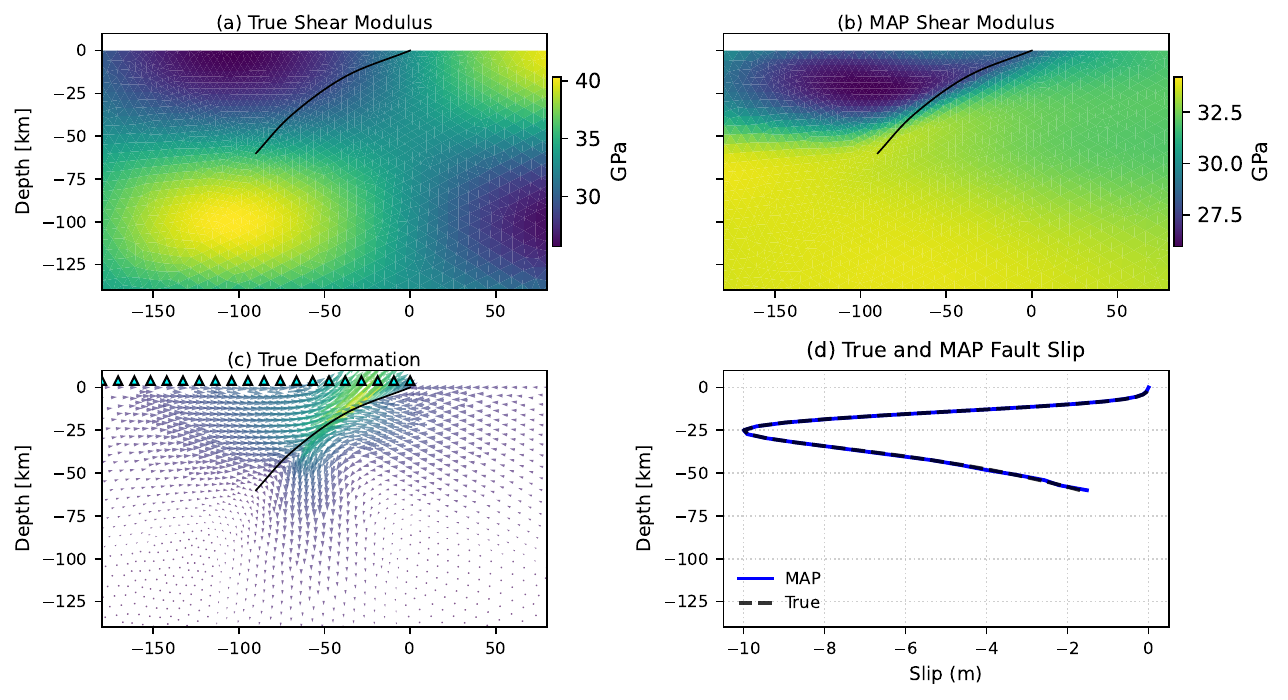}
    \caption{Coseismic inversion setup and high-fidelity MAP reconstruction.
    (a) True physical shear-modulus field \(\mu(m^\dagger)\).
    (b) Physical shear-modulus field corresponding to the high-fidelity MAP estimate, \(\mu(m_{\mathrm{MAP}})\).
    (c) True displacement field, with surface observation stations indicated.
    (d) True and MAP fault slip as functions of depth.
    Panels (a)--(c) show a zoomed region surrounding the fault rather than the full \(500\times250\) km computational domain.}
    \label{fig:map_visualization}
\end{figure}

The high-fidelity MAP estimate recovers the fault slip almost exactly. In contrast, the recovered physical shear-modulus field is smoother than the truth. Its spatial structure is captured most clearly in the shallower part of the domain, above the downdip end of the fault, whereas deeper variations remain poorly resolved.

\paragraph{Matched-cost forward error}
We assess surrogate accuracy through two complementary diagnostics in Figure~\ref{fig:forward_error_source_scale}: panel (a) measures prior-weighted forward accuracy as the offline training budget varies, whereas panel (b) measures posterior robustness to changes in source amplitude. We first consider the matched-cost comparison in panel (a), which reports the prior-averaged squared forward error \(\Ex{\prior}{\|\Fwhite-\tilde{\Fwhite}\|^2}\) under the matched offline PDE budget, with curves summarizing the three paired training seeds. By \eqref{eq:prior_predictive_kl_forward_bound}, this quantity directly controls the prior-predictive expected KL divergence between the high-fidelity and surrogate-induced posteriors.

\begin{figure}[htbp]
    \centering
    \includegraphics[width=0.9\linewidth]{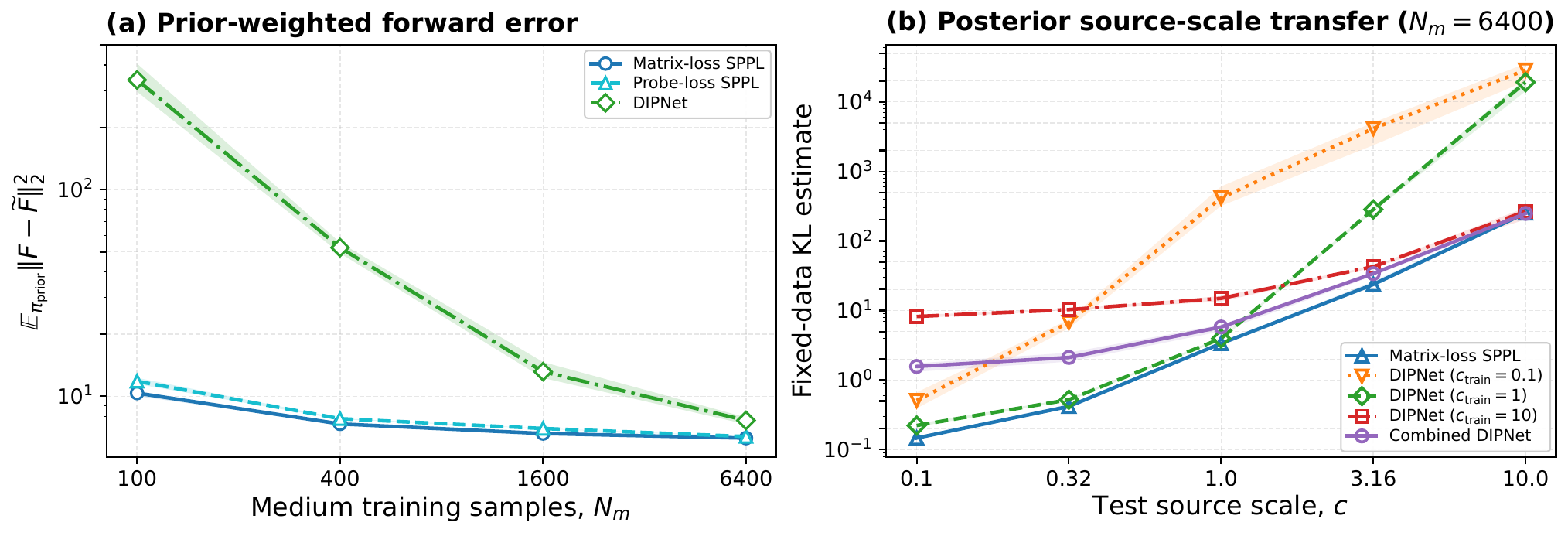}
    \caption{Two complementary surrogate diagnostics. (a) Matched-cost prior-averaged squared forward error versus \(N_m\) for Matrix-loss SPPL, Probe-loss SPPL, and DIPNet; curves summarize three paired training seeds. (b) Fixed-data posterior robustness under changes in source amplitude, measured by the posterior KL divergence versus the data scale \(c\) in \(\yobs_c=c\,\yobs_1\) at \(N_m=6400\). A separate high-fidelity posterior is sampled for each \(c\); Matrix-loss SPPL is compared with DIPNets trained at \(c_{\mathrm{train}}\in\{1/10,1,10\}\) and with the Combined DIPNet trained on all three scales.}
    \label{fig:forward_error_source_scale}
\end{figure}

Figure~\ref{fig:forward_error_source_scale}(a) shows that both SPPL variants are substantially more data-efficient than DIPNet at smaller training budgets, although the gap narrows as more training data are added. Matrix-loss and Probe-loss SPPL are nearly indistinguishable across the tested budgets. Because Probe-loss SPPL uses exactly the same realized probe--output pairs as DIPNet, the results suggest that the improvement arises primarily from the structure-preserving affine source parameterization rather than from coefficient-matrix supervision. The agreement between the two SPPL variants also shows that removing the finite-probe sampling error provides little additional benefit in this experiment.

 \paragraph{Source-scale robustness}
Figure~\ref{fig:forward_error_source_scale}(b) reports the fixed-data posterior KL divergence across the five data scales defined above. Each scale-specific DIPNet is most accurate near the source amplitudes represented during its training and deteriorates as the test-data scale moves away from that regime. Matrix-loss SPPL remains accurate across the range of \(c\), while the Combined DIPNet achieves comparable robustness only after being trained on all three source-amplitude regimes. This demonstrates the benefit of preserving the known linear source dependence rather than learning its behavior across source amplitudes from additional training data.

Figure~\ref{fig:ood_rmse} complements the posterior diagnostic with a forward-level view, reporting the relative observation error \(\|\tilde{\Fwhite}-\Fwhite\|/\|\Fwhite\|\) against the normalized retained-source radius \(\rho_s=\|\balpha\|_2/\sqrt{12}\) for the four training budgets. The test set consists of 1,000 independent medium--source pairs, with media drawn from the medium prior and retained-source coordinates drawn from \(\mathcal N(0,I_{12})\); each source is then rescaled to the prescribed radii, and its high-fidelity PDE observation is scaled accordingly using source linearity.
Across all four budgets, both SPPL variants maintain low error over essentially the full source-radius range, even at low training budgets with \(N_m=100\). On the other hand, while increasing \(N_m\) reduces DIPNet error and broadens its low-error region, its accuracy remains tied to the source amplitudes represented during training. Because all models use identical retained spaces, these differences reflect surrogate parameterization rather than dimension reduction,
with SPPL models remaining accurate for sources that lie far outside of the training distribution.

\begin{figure}[htbp]
    \centering
    \includegraphics[width=1\linewidth]{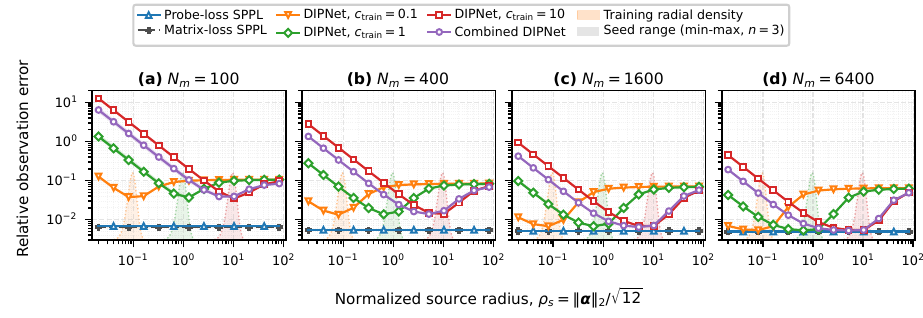}
    \caption{Relative observation error versus normalized source radius
    \(\rho_s=\|\balpha\|_2/\sqrt{12}\)
    for \(N_m=100\), \(400\), \(1600\), and \(6400\), with one panel per
    training budget. In every panel, DIPNet is trained separately at
    \(c_{\mathrm{train}}\in\{1/10,1,10\}\) and evaluated across the displayed
    source-amplitude range; Matrix-loss and Probe-loss SPPL are shown as common
    structure-preserving references. All models use the same block-reduced
    coordinates. Curves show the mean over three paired training seeds, and the
    seed envelopes show the minimum--maximum range across the three seeds
    (\(n=3\)).}
    \label{fig:ood_rmse}
\end{figure}

\paragraph{Posterior alignment}
Having established the matched-cost forward accuracy and source-scale behavior, we close the accuracy analysis with a more detailed examination of the nominal (\(c=1\)) posterior. Figure~\ref{fig:posterior_alignment} compares the Matrix-loss SPPL surrogate posterior trained with \(N_m=6400\) medium realizations against the high-fidelity PDE-based posterior in both reduced and physical coordinates.
The reduced-coordinate summaries agree closely, and the physical fault-slip posterior mean, credible interval, and MAP estimate are nearly indistinguishable. Thus the surrogate approximation does not materially affect the recovered posterior quantities of interest.

\begin{figure}[htbp]
    \centering
    \includegraphics[width=\linewidth]{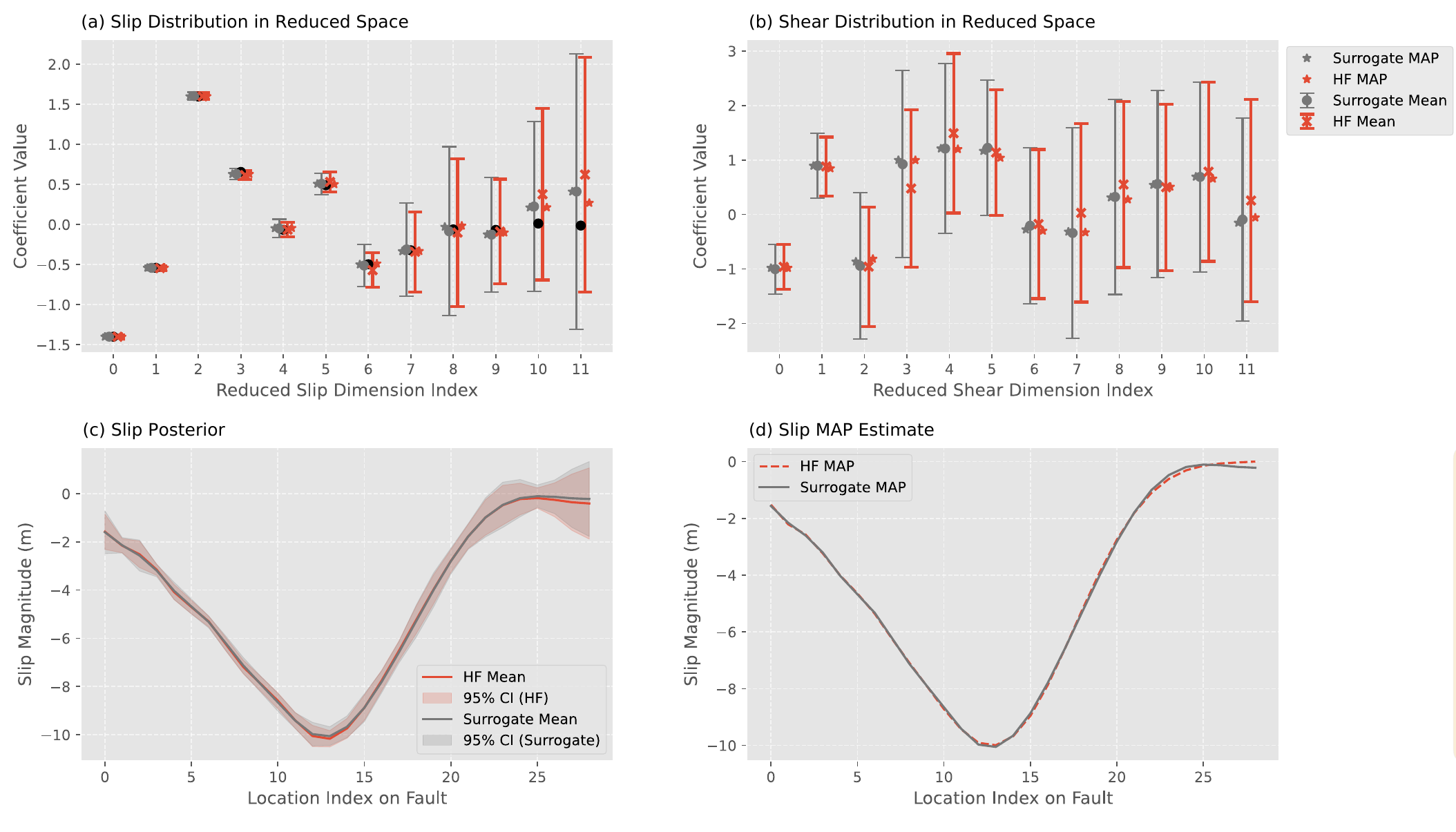}
    \caption{Alignment between the Matrix-loss SPPL surrogate posterior trained with \(N_m=6400\) medium realizations and the high-fidelity (HF) PDE-based posterior.
(a) Reduced slip-coordinate posterior summaries and
(b) reduced shear-modulus-coordinate posterior summaries.
Gray denotes the Matrix-loss SPPL posterior and red denotes the HF posterior. Stars denote the corresponding MAP estimates, markers denote posterior means, and error bars show the respective 95\% posterior credible intervals.
(c) Physical-coordinate fault-slip posterior mean and 95\% posterior credible interval for the HF and surrogate posteriors.
(d) Physical-coordinate fault-slip MAP estimates from the HF and surrogate posteriors.}
    \label{fig:posterior_alignment}
\end{figure}

\paragraph{Computational efficiency and offline break-even}
At the largest offline budget, \(N_m=6400\), the surrogate approaches provide several orders of magnitude of online acceleration relative to the high-fidelity PDE model. All wall-clock timings reported here were measured on a single core of an Intel Xeon CPU MAX 9480 processor on a Stampede3 Sapphire Rapids (SPR) compute node at TACC, with no MPI parallelism. The wall-clock time of a single forward evaluation decreases from approximately \(2.5\,\mathrm{s}\) to below \(5\times10^{-4}\,\mathrm{s}\), while the time per LA-pCN step decreases from approximately \(4\,\mathrm{s}\) to below \(2\times10^{-3}\,\mathrm{s}\). Sampling efficiency also improves substantially: the mean integrated autocorrelation time (IACT) decreases from \(22.8\) for the high-fidelity chain to below \(7\) for the surrogate-based chains. This improvement is consistent with performing MCMC only in the retained reduced coordinates. Taken together, these timings indicate an online MCMC speedup of at least a factor of $10^3$.

These online gains should be interpreted together with the offline cost required to construct the reduced spaces and train the surrogates. In this Gaussian-source application, \(\Cy\) and \(\Hss(\Py)\) are estimated from 200 common medium-prior samples, while \(\Hmmaff(\Py,\Ps)\) is estimated from 200 additional independent medium-prior samples; estimation of the affine medium diagnostic matrix accounts for the dominant reduction-stage expense. The adjoint-reuse strategy from \Cref{sec:factorization_reuse} is therefore essential for keeping this stage practical. For the largest Matrix-loss SPPL model with \(N_m=6400\), the total offline wall-clock time for PDE evaluations was approximately \(5.8\) hours, comparable to the time required to generate about \(200\) effective samples from the high-fidelity posterior. Network training cost remained negligible relative to PDE cost. Thus the reported break-even comparison places the online speedup in the context of the one-time offline investment needed to obtain it.

\section{Conclusion}
\label{sec:conclusion}

We have presented the structure-preserving partially linear (SPPL) framework for dimension reduction and surrogate modeling in Bayesian inverse problems with nonlinear medium dependence and linear source dependence. The theory characterizes the optimal blockwise ridge and affine-in-source approximations, establishes separate spectral criteria for output, source, and medium reduction, and provides prior-predictive posterior-error control. For Gaussian source priors, the optimal affine-in-source approximation coincides with the optimal blockwise ridge approximation. Quadratic benchmarks demonstrate that the blockwise reduction remains competitive with unrestricted joint reduction, including for non-Gaussian source priors. In the coseismic fault-slip inversion, a controlled three-model ablation indicates that the principal improvement in surrogate accuracy and data efficiency arises from preserving the known linear source dependence rather than from coefficient-matrix supervision alone. This structure also provides robustness to changes in source amplitude and enables orders-of-magnitude acceleration of online posterior sampling.

The present framework assumes a known partially linear decomposition and independent medium and source priors, while the PDE-based numerical validation is limited to a synthetic two-dimensional coseismic problem. Several directions merit further investigation. First, extending the optimal approximation and dimension-reduction theory to multilinear and polynomial parameter dependence, as well as identifying linear and nonlinear parameter directions in more general nonlinear models, would broaden the applicability of structure-preserving reduction. Second, for Gaussian source priors, analytically marginalizing the source variables would enable MCMC directly on the medium posterior, potentially improving sampling efficiency and motivating new dimension-reduction theory for the marginalized problem. Finally, an important direction is to extend SPPL to large-scale, three-dimensional geophysical inverse problems, particularly the joint Bayesian inversion of shear modulus and fault slip for the 2011 Tohoku-Oki earthquake, building on the inversion setting of Puel et al.~\cite{Puel_2024}. These developments offer a promising path toward efficient, structure-preserving Bayesian inference for increasingly complex and realistic PDE-constrained inverse problems.

\appendix
\crefalias{section}{appendix}

The appendices below collect proofs, quadratic benchmark formulas, and application reproducibility details used in the main text.

\section{Auxiliary projection and source-moment identities}
\label{app:auxiliary_projection_source_identities}

\begin{lemma}[Projection and source-moment identities]
\label{lem:projection_source_identities}
Let \(X\in L^2\) be vector-valued, let \(\mathcal G\) be a sub-\(\sigma\)-algebra,
and let \(Y\in L^2\) be \(\mathcal G\)-measurable. Then
\begin{equation}
\label{eq:conditional_pythagoras_appendix}
\E{\|X-Y\|^2}
=
\E{\|X-\E{X\mid\mathcal G}\|^2}
+
\E{\|\E{X\mid\mathcal G}-Y\|^2}.
\end{equation}
Moreover, with
\[
\balpha=\Phis^T\Rs(\bvecs-\bars),
\qquad
\widehat{\bvecs}=\bars+\Phis\balpha,
\qquad
 e_s=\bvecs-\widehat{\bvecs},
\]
we have
\begin{equation}
\label{eq:source_moment_identities_appendix}
\begin{aligned}
\E{\balpha} &= 0,
\qquad
\E{\balpha\balpha^T} = I_{r_s},
\qquad
\E{(\bvecs-\bars)\balpha^T} = \Phis,
\\
\E{e_s} &= 0,
\qquad
\E{e_s\balpha^T} = 0,
\qquad
\E{e_s\widehat{\bvecs}^{T}} = 0.
\end{aligned}
\end{equation}
\end{lemma}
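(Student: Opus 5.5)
The Pythagorean identity \eqref{eq:conditional_pythagoras_appendix} and the source moments \eqref{eq:source_moment_identities_appendix} are handled separately; both are short computations.

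For \eqref{eq:conditional_pythagoras_appendix}, write \(Z\coloneqq\E{X\mid\mathcal G}\), which lies in \(L^2\) by conditional Jensen. Decompose \(X-Y=(X-Z)+(Z-Y)\) and expand the squared norm, so that only the cross term \(\E{(X-Z)^T(Z-Y)}\) needs attention. Since \(Z-Y\) is \(\mathcal G\)-measurable and square integrable, the tower property and the pull-out property of conditional expectation give
\[
\E{(X-Z)^T(Z-Y)}=\E{\E{X-Z\mid\mathcal G}^T(Z-Y)}=0,
\]
because \(\E{X-Z\mid\mathcal G}=0\). The integrability needed to pull out \(Z-Y\) follows from Cauchy--Schwarz, since both factors are in \(L^2\).

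For the source moments, I would use only the definitions of \(\bars\) and \(\bm C_s\) from \Cref{ass:standing_assumptions} together with \(\Phis^T\Rs\Phis=I_{r_s}\) and \(\Rs=\bm C_s^{-1}\). The four identities follow in order.
\begin{enumerate}[label=\textnormal{(\roman*)}]
\item Linearity and \(\E{\bvecs}=\bars\) give \(\E{\balpha}=0\).
\item We have \(\E{(\bvecs-\bars)\balpha^T}=\bm C_s\Rs\Phis=\Phis\).
\item Multiplying (ii) on the left by \(\Phis^T\Rs\) gives \(\E{\balpha\balpha^T}=\Phis^T\Rs\Phis=I_{r_s}\).
\item Since \(e_s=(\bvecs-\bars)-\Phis\balpha\), (i) gives \(\E{e_s}=0\), and (ii)--(iii) give \(\E{e_s\balpha^T}=\Phis-\Phis I_{r_s}=0\).
\end{enumerate}
The last identity then follows from \(\widehat{\bvecs}=\bars+\Phis\balpha\):
\[
\E{e_s\widehat{\bvecs}^T}=\E{e_s}\bars^T+\E{e_s\balpha^T}\Phis^T=0.
\]

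There is no real obstacle. The only points needing care are the integrability justification for the cross term, where we must use that \(Y\) and \(Z\) are both in \(L^2\), and the fact that \(\Rs\) is symmetric, which lets \(\balpha^T=(\bvecs-\bars)^T\Rs\Phis\) be used without transposition issues. Finite second moments of \(\bvecs\) ensure that every expectation above is well defined.
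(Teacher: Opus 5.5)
Your proposal is correct and follows essentially the same route as the paper. The Pythagorean identity is the \(L^2\)-orthogonality of conditional expectation, and the cross-term argument you spell out is the standard proof of the fact the paper simply cites. The source moments come from \(\E{\bvecs-\bars}=0\), \(\operatorname{Cov}(\bvecs)=\Rs^{-1}\), and \(e_s=(\bvecs-\bars)-\Phis\balpha\); the only cosmetic difference is that you obtain \(\E{\balpha\balpha^T}=I_{r_s}\) by left-multiplying \(\E{(\bvecs-\bars)\balpha^T}=\Phis\) by \(\Phis^T\Rs\), instead of computing it directly.
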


\begin{proof}
Equation~\eqref{eq:conditional_pythagoras_appendix} is the orthogonal-projection
identity for conditional expectation in \(L^2\). For the source identities,
\(\E{\bvecs-\bars}=0\) and \(\operatorname{Cov}(\bvecs)=\Rs^{-1}\) give
\[
\E{\balpha}=0,
\qquad
\E{\balpha\balpha^T}
=
\Phis^T\Rs\Rs^{-1}\Rs\Phis
=I_{r_s},
\qquad
\E{(\bvecs-\bars)\balpha^T}
=
\Rs^{-1}\Rs\Phis
=\Phis.
\]
Since \(e_s=(\bvecs-\bars)-\Phis\balpha\), the remaining identities follow:
\[
\E{e_s}=0,
\qquad
\E{e_s\balpha^T}=\Phis-\Phis=0,
\qquad
\E{e_s\widehat{\bvecs}^{T}}
=
\E{e_s}\bars^T+\E{e_s\balpha^T}\Phis^T
=0.
\]
\end{proof}

\section{Proof of the optimal blockwise ridge approximation}
\label{app:proof_blockwise_ridge_approximation}

\begin{proof}[Proof of \Cref{thm:optimal_blockwise_ridge_approximation}]
Up to almost-sure equality, \(\Bproj(\Pm,\Ps,\Py)\) is the closed subspace of
\(L^2_{\prior}\) consisting of \(\operatorname{Im}(\Py)\)-valued functions
measurable with respect to \(\sigma(\Pm\bvecm,\Ps\bvecs)\). Hence the
Hilbert-space projection theorem gives
\[
\FoptY
=
\E{\Py\Fwhite(\bvecm,\bvecs)\mid \Pm\bvecm,\Ps\bvecs}
=
\Py\,\E{\Gm\bvecs\mid \Pm\bvecm,\Ps\bvecs}.
\]
Because the prior blocks are independent and the conditioning variables depend
on separate blocks, their conditional law factorizes, so
\[
\E{\Gm\bvecs\mid \Pm\bvecm,\Ps\bvecs}
=
\E{\Gm\mid \Pm\bvecm}\,
\E{\bvecs\mid \Ps\bvecs}.
\]
With \(\bvecm_r=\Pm\bvecm\), \(\bvecs_r=\Ps\bvecs\), and the definitions of
\(\Gopt\) and \(\sopt\), this is
\[
\FoptY(\Ptheta\btheta)
=
\Py\,\Gopt(\bvecm_r)\,\sopt(\bvecs_r),
\]
which proves the result.
\end{proof}

\section{Proof of the optimal affine-in-source approximation}
\label{app:proof_optimal_affine_in_source}

\begin{proof}[Proof of \Cref{thm:optimal_affine_in_source}]
Let \(\bvecm_r=\Pm\bvecm\). By
\Cref{lem:projection_source_identities}, for square-integrable coefficient
functions \(a\) and \(B\),
\[
\Ex{\prior}{\|a(\bvecm_r)+B(\bvecm_r)\balpha\|^2}
=
\Ex{\priorm}{\|a(\bvecm_r)\|^2+\|B(\bvecm_r)\|_{\mathrm F}^2}.
\]
Thus \((a,B)\mapsto a+B\balpha\) is an isometry from the corresponding
Hilbert direct sum onto \(\Aaff(\Pm,\Ps,\Py)\); in particular, the affine
class is closed and its \(L^2_{\prior}\)-projection exists.

Every element of \(\Aaff\) lies in \(\operatorname{Im}(\Py)\). Orthogonality
of \(\Py\) therefore reduces the minimization of
\(\Ex{\prior}{\|\Fwhite-a-B\balpha\|^2}\) to that of
\[
\Ex{\prior}{
\|\Py\Gm\bvecs-a(\bvecm_r)-B(\bvecm_r)\balpha\|^2}.
\]
Since \(\balpha\) is independent of \(\bvecm_r\),
\Cref{lem:projection_source_identities} gives
\(\E{\balpha\mid\bvecm_r}=0\) and
\(\E{\balpha\balpha^T\mid\bvecm_r}=I_{r_s}\). The conditional affine
normal equations are therefore
\[
 a^*(\bvecm_r)
 =
 \E{\Py\Gm\bvecs\mid\bvecm_r},
\qquad
 B^*(\bvecm_r)
 =
 \E{\Py\Gm\bvecs\balpha^T\mid\bvecm_r}.
\]
Independence of the medium and source blocks and
\Cref{lem:projection_source_identities} give
\[
 a^*(\bvecm_r)
 =
 \Py\Gopt(\bvecm_r)\bars,
\qquad
 B^*(\bvecm_r)
 =
 \Py\Gopt(\bvecm_r)\E{\bvecs\balpha^T}
 =
 \Py\Gopt(\bvecm_r)\Phis.
\]
Hence
\[
\tilde{\Fwhite}_{\mathrm{aff}}^*(\Ptheta\btheta)
=
\Py\Gopt(\bvecm_r)
\bigl(\bars+\Phis\balpha\bigr)
=
\Py\Gopt(\bvecm_r)
\widehat{\bvecs}_{\mathrm{aff}}(\balpha).
\]
The covariance of \(\balpha\) is the identity, so the conditional affine
normal equations are nondegenerate and the minimizer is unique up to
almost-sure equality.

Finally, \(\Aaff(\Pm,\Ps,\Py)\subset\Bproj(\Pm,\Ps,\Py)\). For nested closed
subspaces, projecting first onto the larger subspace and then onto the smaller
one is the same as projecting directly onto the smaller subspace. Therefore the
same element is obtained by projecting the optimal blockwise ridge approximation
\(\FoptY\) onto \(\Aaff\).
\end{proof}

\section{Proof of exactness of the affine restriction for Gaussian source priors}
\label{app:proof_gaussian_affine_restriction_exactness}

\begin{proof}[Proof of \Cref{cor:gaussian_affine_restriction_exactness}]
Write
\(
\bvecs-\bars
=
\Ps(\bvecs-\bars)+(I-\Ps)(\bvecs-\bars)
\).
Because \(\bvecs\sim\N{\bars}{\Rs^{-1}}\) and \(\Ps\) is
\(\Rs\)-orthogonal, the two Gaussian components are uncorrelated:
\[
\operatorname{Cov}\!\left(
\Ps(\bvecs-\bars),(I-\Ps)(\bvecs-\bars)
\right)
=
\Ps\Rs^{-1}(I-\Ps)^T
=
\Phis\Phis^T(I-\Ps)^T
=0.
\]
They are therefore independent. The discarded component has zero mean, so
\[
\sopt(\Ps\bvecs)
=
\E{\bvecs\mid\Ps\bvecs}
=
\bars+\Ps(\bvecs-\bars)
=
\bars+\Phis\balpha.
\]
This proves \eqref{eq:gaussian_source_conditional_mean}; substitution into
\Cref{thm:optimal_affine_in_source} gives
\eqref{eq:gaussian_affine_equals_blockwise_ridge}. Therefore the
optimal affine and blockwise ridge approximations coincide.
\end{proof}

\section{Proof of the direct affine error decomposition}
\label{app:proof_affine_error_decomposition}

\begin{proof}[Proof of \Cref{prop:affine_error_decomposition}]
Set
\[
\widehat{\bvecs}
\coloneqq
\widehat{\bvecs}_{\mathrm{aff}}(\balpha)
=
\bars+\Phis\balpha,
\qquad
 e_s\coloneqq\bvecs-\widehat{\bvecs}.
\]
Using the optimal affine-in-source approximation, decompose
\begin{align*}
\Fwhite-\tilde{\Fwhite}_{\mathrm{aff}}
&=
\underbrace{(I-\Py)\Fwhite}_{r_{\mathrm{obs}}}
+
\underbrace{\Py\Gm e_s}_{r_{\mathrm{src}}}
+
\underbrace{\Py\bigl(\Gm-\Gopt(\bvecm_r)\bigr)
\widehat{\bvecs}}_{r_{\mathrm{med}}}
+
\underbrace{\tilde{\Fwhite}_{\mathrm{aff}}^*
-\tilde{\Fwhite}_{\mathrm{aff}}}_{r_{\mathrm{surr}}}.
\end{align*}
These four terms are pairwise orthogonal in \(L^2_{\prior}\). The output term
is pointwise orthogonal to the other three because they are
\(\operatorname{Im}(\Py)\)-valued. By
\eqref{eq:source_moment_identities_appendix} and independence of the medium
and source blocks, \(r_{\mathrm{src}}\) is orthogonal to both
\(r_{\mathrm{med}}\) and \(r_{\mathrm{surr}}\), the latter having the form
\(a_\Delta(\bvecm_r)+B_\Delta(\bvecm_r)\balpha\). Finally,
\(r_{\mathrm{surr}}\) is measurable with respect to
\((\bvecm_r,\balpha)\), while independence gives
\[
\E{\Gm-\Gopt(\bvecm_r)\mid\bvecm_r,\balpha}
=
\E{\Gm-\Gopt(\bvecm_r)\mid\bvecm_r}
=0,
\]
so \(r_{\mathrm{med}}\perp r_{\mathrm{surr}}\). Pythagoras therefore yields
\[
\Ex{\prior}{
\|\Fwhite-\tilde{\Fwhite}_{\mathrm{aff}}\|^2}
=
\Robs+
\Ex{\prior}{\|\Py\Gm e_s\|^2}
+
\Cmedaff+
\Rsurraff.
\]
Since \(e_s=(I-\Ps)(\bvecs-\bars)\) and
\(\operatorname{Cov}(\bvecs)=\Rs^{-1}\), cyclicity of the trace gives
\begin{align*}
\Ex{\prior}{\|\Py\Gm e_s\|^2}
&=
\tr\!\left(
\Hss(\Py)(I-\Ps)\Rs^{-1}(I-\Ps^T)
\right)\\
&=
\tr\!\left(
\Rs^{-1}(I-\Ps^T)\Hss(\Py)(I-\Ps)
\right)
=
\Rsrcaff.
\end{align*}
This proves \eqref{eq:affine_direct_mse_split} and
\eqref{eq:Rsrc_aff_exact}.
\end{proof}

\section{Proof of the affine medium trace bound and expected-KL control}
\label{app:proof_affine_medium_control_kl}

\begin{proof}[Proof of \Cref{prop:affine_trace_kl_bounds}]
Linearity of \(v\mapsto J_v(\bvecm)\) gives
\[
J_{\widehat{\bvecs}_{\mathrm{aff}}(\balpha)}(\bvecm)
=
J_{\bars}(\bvecm)
+
\sum_{i=1}^{r_s}\alpha_i J_{\bm\phi_i}(\bvecm).
\]
Using \(\E{\balpha}=0\) and
\(\E{\balpha\balpha^T}=I_{r_s}\) from
\Cref{lem:projection_source_identities}, expansion of the quadratic form
immediately gives \eqref{eq:Hmmaff_moment_expansion}.

For fixed \(\balpha\), let
\(h_{\balpha}(\bvecm)=\Py\Gm
\widehat{\bvecs}_{\mathrm{aff}}(\balpha)\). By
\Cref{ass:standing_assumptions,ass:medium_poincare} and Fubini's theorem, the subspace
Poincar\'e inequality applies for almost every \(\balpha\). Independence yields
\[
\E{h_{\balpha}(\bvecm)\mid\Pm\bvecm}
=
\Py\Gopt(\Pm\bvecm)
\widehat{\bvecs}_{\mathrm{aff}}(\balpha),
\]
and hence
\begin{align*}
&\Ex{\bvecm\sim\priorm}{
\left\|
\Py\bigl(\Gm-\Gopt(\Pm\bvecm)\bigr)
\widehat{\bvecs}_{\mathrm{aff}}(\balpha)
\right\|^2}\\
&\qquad\le
\tr\!\left(
\Rm^{-1}(I-\Pm^T)
\Ex{\bvecm\sim\priorm}{
J_{\widehat{\bvecs}_{\mathrm{aff}}(\balpha)}(\bvecm)^T
\Py
J_{\widehat{\bvecs}_{\mathrm{aff}}(\balpha)}(\bvecm)}
(I-\Pm)
\right).
\end{align*}
Averaging over the source prior gives
\[
\Cmedaff
\le
\tr\!\left(
\Rm^{-1}(I-\Pm^T)
\Hmmaff(\Py,\Ps)
(I-\Pm)
\right)
=
\Tmedaff.
\]
The eigenspace claim follows from \eqref{eq:trace_trailing_ev_equality}.
The medium bound, together with \eqref{eq:prior_predictive_kl_forward_bound}
and \eqref{eq:affine_direct_mse_split}, yields
\[
\Ex{\yobs}{
D_{\mathrm{KL}}\!\left(
\post\,\|\,\tilde{\pi}_{\mathrm{post}}^{\yobs}
\right)}
\le
\frac12\left(
\Robs+\Rsrcaff+\Tmedaff+\Rsurraff
\right),
\]
which is \eqref{eq:affine_expected_kl_bound}.
\end{proof}

\section{Proof of the expected coefficient-loss identity}
\label{app:proof_expected_coeff_loss}

\begin{proof}[Proof of \Cref{prop:coefficient_loss_identity}]
Define
\[
q_0(\bvecm)\coloneqq\Phiy^T\Gm\bars,
\qquad
Q_0(\bvecm)\coloneqq\Phiy^T\Gm\Phis.
\]
By the definition of \(\Gopt_r\),
\[
\qsurr^*(\bmu)=\E{q_0(\bvecm)\mid\bmu},
\qquad
\Qsurr^*(\bmu)=\E{Q_0(\bvecm)\mid\bmu},
\]
where the matrix identity is understood componentwise. Applying
\eqref{eq:conditional_pythagoras_appendix} to \(q_0\) and to
\(\operatorname{vec}Q_0\) gives
\begin{align*}
\mathcal L_{\mathrm{coeff}}(\bm w)
&=
\Ex{\bvecm\sim\priorm}{
\|\qsurr(\bmu;\bm w)-\qsurr^*(\bmu)\|^2
+
\|\Qsurr(\bmu;\bm w)-\Qsurr^*(\bmu)\|_{\mathrm F}^2}
+
\Cmedaff,
\end{align*}
Since \(\bmu\) and \(\bvecm_r\) carry the same retained medium coordinates,
the residual term is exactly \eqref{eq:Cmed_aff_def}.

It remains to identify the first expectation. Set
\(\Delta q=\qsurr^*-\qsurr\) and
\(\Delta Q=\Qsurr^*-\Qsurr\). Using
\(\Phiy^T\Phiy=I\), the definitions of the optimal and trainable affine
surrogates, independence of the prior blocks, and
\eqref{eq:source_moment_identities_appendix},
\[
\Rsurraff(\bm w)
= \Ex{\prior}{\|\Delta q(\bmu)+\Delta Q(\bmu)\balpha\|^2}
= \Ex{\bvecm\sim\priorm}{
\|\Delta q(\bmu)\|^2+
\|\Delta Q(\bmu)\|_{\mathrm F}^2}.
\]
Consequently,
\(
\mathcal L_{\mathrm{coeff}}(\bm w)
=
\Rsurraff(\bm w)+\Cmedaff
\), proving \eqref{eq:coefficient_loss_decomposition}.

For completeness, let
\(A(\bvecm)=\Gm-\Gopt(\bvecm_r)\). Since
\(\widehat{\bvecs}_{\mathrm{aff}}=\bars+\Phis\balpha\), the same source
moment identities and \(\Py=\Phiy\Phiy^T\) give
\begin{align*}
\Ex{\prior}{
\|\Py A(\bvecm)\widehat{\bvecs}_{\mathrm{aff}}(\balpha)\|^2}
&=
\Ex{\bvecm\sim\priorm}{
\|\Phiy^TA(\bvecm)\bars\|^2
+
\|\Phiy^TA(\bvecm)\Phis\|_{\mathrm F}^2},
\end{align*}
which proves the equivalence of \eqref{eq:Cmed_aff_def} and
\eqref{eq:Cmed_aff_equiv}.
\end{proof}

\section{Quadratic benchmark construction and analytic formulas}
\label{app:quadratic_benchmark_formulas}

This appendix gives the coefficient-generation procedure, parameter regimes,
Monte Carlo evaluation details, and exact formulas used in
Section~\ref{sec:toy_validation}. The benchmark map, basis constructions, and
rank-matching rule are defined in the main text.

\subsection{Coefficient generation and parameter regimes}
\label{app:quadratic_benchmark_details}

The quadratic benchmark map and dimensions are defined in
\eqref{eq:quadratic_benchmark_map}. Its coefficient matrices \(\mathbf{A}_0,\mathbf{A}_i,\mathbf{B}_{ij}\in \mathbb{R}^{n_{\mathrm{obs}}\times d_s}\) are generated from common random output and source rotations \(U\) and \(V\). Specifically, each coefficient matrix \(\mathbf C\) is generated as
\begin{equation}
\label{eq:toy_coefficient_generation}
\mathbf C =
U\,\operatorname{diag}\!\left(
0.35\,\gamma\,0.9^{\ell-1}\exp(0.25z_\ell)
\right)_{\ell=1}^{10}V^T,
\qquad
z_\ell\stackrel{\mathrm{iid}}{\sim}\mathcal N(0,1),
\end{equation}
using \(\gamma=\gamma_0\) for \(\mathbf A_0\) and \(\gamma=1\) for all \(\mathbf A_i\) and \(\mathbf B_{ij}\). For the quadratic coefficients, we independently generate \(\mathbf B_{ij}\) for \(1\le i\le j\le d_m\) using \eqref{eq:toy_coefficient_generation} and then set \(\mathbf B_{ji}=\mathbf B_{ij}\) for \(i<j\). This construction enforces the symmetry \(\mathbf B_{ij}=\mathbf B_{ji}\) used in \eqref{eq:quadratic_benchmark_map}. The factor \(0.9^{\ell-1}\) sets the gradual spectral decay.

The medium prior is \(\bvecm\sim\N{\bm 0}{I_{d_m}}\), so the canonical medium Poincar\'e metric matrix is \(\Rm=I_{d_m}\). For Gaussian sources, let \(\bars=c_s v\), with \(v_i=0.1(i+1)\), \(i=1,\ldots,10\), and use \( (c_s,\gamma_0)\in\{(2,2),(4,0.5),(8,0.3)\}  \)
for the source-dominated, approximately balanced, and medium-dominated regimes. For the Gaussian-mixture experiment, use
\[
\priors(\bvecs)
=
\tfrac12\mathcal N(\mu_+,I_{10})
+
\tfrac12\mathcal N(\mu_-,I_{10}),
\qquad
\mu_\pm=c_b v\pm\delta\frac{w}{\|w\|},
\]
where \(w_i=1-(i-1)/6\), and use \((c_b,\delta,\gamma_0)=(2,2,2)\), \((4,3,0.5)\), and \((8,4,0.3)\) for the same three regimes.

For the numerical evaluation of the joint projected errors, we use
\(4\times10^4\) joint samples for Gaussian sources and \(2\times10^4\) for
Gaussian-mixture sources. The blockwise affine errors and medium bound are evaluated
analytically.

For every orthogonal medium projector \(\Pm\),
\begin{equation*}
\Gopt(\Pm\bvecm)
=
\mathbf{A}_0
+\sum_{i=1}^{d_m}(\Pm\bvecm)_i\mathbf{A}_i
+\sum_{i,j=1}^{d_m}
\left[
\bigl((\Pm\bvecm)(\Pm\bvecm)^T\bigr)_{ij}
+(I-\Pm)_{ij}
\right]\mathbf{B}_{ij}.
\end{equation*}

For the case of Gaussian source prior , if \(\bvecs\sim\N{\bars}{I_{d_s}}\), then
\(\widehat{\bvecs}_{\mathrm{aff}}=\bars+\Ps(\bvecs-\bars)\)
is also the retained conditional source mean.

For the case of Gaussian-mixture source prior, we have
\begin{equation}
\label{eq:toy_gmm_prior}
\priors(\bvecs)
=
\sum_{k=1}^{K}w_k\,\mathcal N(\mu_k,I_{d_s}),
\qquad
w_k>0,
\qquad
\sum_{k=1}^{K}w_k=1,
\end{equation}
define
\begin{equation*}
\bars=\sum_{k=1}^{K}w_k\mu_k,
\qquad
\bm C_s
=I_{d_s}+\sum_{k=1}^{K}w_k\mu_k\mu_k^T-\bars\bars^T,
\qquad
\Rs=\bm C_s^{-1}.
\end{equation*}
Here \(\Rs\) is the inverse covariance of the full mixture prior, not the
precision of an individual mixture component. We use the main-text convention
\(
\balpha=\Phis^T\Rs(\bvecs-\bars)
\),
\(
\Ps=\Phis\Phis^T\Rs
\), with \(\Phis^T\Rs\Phis=I\), and hence
\[
\widehat{\bvecs}_{\mathrm{aff}}(\balpha)
=
\bars+\Phis\balpha
=
\bars+\Ps(\bvecs-\bars).
\]
The SPPL affine construction uses these first two source moments and the retained
source space; it does not require the nonlinear retained conditional mean.

\subsection{Exact unrestricted joint benchmark for the Gaussian-mixture source prior}
\label{app:toy_gmm_joint_benchmark}

The formulas also cover the Gaussian benchmark with \(K=1\), \(w_1=1\), and \(\mu_1=\bars\). Let
\[
\bm{\Phi}_\theta^T\bm R_{\btheta}\bm{\Phi}_\theta=I_{r_{\mathrm{joint}}},
\qquad
P_{\mathrm{joint}}
=\bm{\Phi}_\theta\bm{\Phi}_\theta^T\bm R_{\btheta},
\qquad
\xi
=\bm{\Phi}_\theta^T\bm R_{\btheta}(\btheta-\bar{\btheta}),
\]
where \(\bar{\btheta}=(\barm,\bars)\) and
\(\bm R_{\btheta}=\diag(\Rm,\Rs)\), consistently with the generalized-eigenvector construction above. Define
\[
A\coloneqq\bm{\Phi}_\theta^T\bm R_{\btheta},
\qquad
S_\xi\coloneqq AA^T
=\bm{\Phi}_\theta^T\bm R_{\btheta}^2\bm{\Phi}_\theta.
\]
Introduce \(c\in\{1,\ldots,K\}\) with \(\mathbb P(c=k)=w_k\) and \(\bvecs\mid c=k\sim\N{\mu_k}{I_{d_s}}\). Then
\[
\btheta\mid c=k\sim\N{\bar{\btheta}_k}{I_{d_m+d_s}},
\qquad
\bar{\btheta}_k=\begin{bmatrix}\bm 0\\\mu_k\end{bmatrix},
\qquad
\bm a_k\coloneqq A(\bar{\btheta}_k-\bar{\btheta}),
\]
and standard Gaussian conditioning gives
\begin{equation*}
\btheta\mid(\xi,c=k)
\sim
\N{
\bar{\btheta}_k
+A^T S_\xi^{-1}(\xi-\bm a_k)
}{
I_{d_m+d_s}-A^T S_\xi^{-1}A
}.
\end{equation*}
Write the conditional mean and covariance as
\[
\bar{\btheta}_k+A^T S_\xi^{-1}(\xi-\bm a_k)
=
\begin{bmatrix}\bmu_{m,k}(\xi)\\\bmu_{s,k}(\xi)\end{bmatrix},
\qquad
I_{d_m+d_s}-A^T S_\xi^{-1}A
=
\begin{bmatrix}
\Sigma_{mm}^{\perp}&\Sigma_{ms}^{\perp}\\
\Sigma_{sm}^{\perp}&\Sigma_{ss}^{\perp}
\end{bmatrix}.
\]
The conditional mixture weights are
\[
\pi_k(\xi)
=
\frac{
w_k\exp\!\left[
-\tfrac12(\xi-\bm a_k)^T S_\xi^{-1}(\xi-\bm a_k)
\right]
}{
\sum_{j=1}^{K}w_j\exp\!\left[
-\tfrac12(\xi-\bm a_j)^T S_\xi^{-1}(\xi-\bm a_j)
\right]
}.
\]
With \(\mu_{m,k,i}\) denoting component \(i\) of \(\bmu_{m,k}\), Gaussian moment identities give the componentwise conditional forward mean directly as
\begin{align*}
f_k(\xi)
&\coloneqq \E{\G(\bvecm)\bvecs\mid\xi,c=k}\\
&=
\mathbf A_0\bmu_{s,k}
+
\sum_{i=1}^{d_m}\mathbf A_i
\left[
\mu_{m,k,i}\bmu_{s,k}
+(\Sigma_{ms}^{\perp})_{i,:}^T
\right]
\\
&\quad+
\sum_{i,j=1}^{d_m}\mathbf B_{ij}
\Bigl[
\bigl(\mu_{m,k,i}\mu_{m,k,j}+(\Sigma_{mm}^{\perp})_{ij}\bigr)\bmu_{s,k}
+\mu_{m,k,i}(\Sigma_{ms}^{\perp})_{j,:}^T
+\mu_{m,k,j}(\Sigma_{ms}^{\perp})_{i,:}^T
\Bigr],
\end{align*}
where the \(\xi\)-dependence of \(\bmu_{m,k}\) and \(\bmu_{s,k}\) is suppressed in the last display. Hence the exact unrestricted joint benchmark is
\(\Fopt_{\mathrm{joint}}(\xi) = \E{\Fwhite(\btheta)\mid\xi} = \sum_{k=1}^{K}\pi_k(\xi)f_k(\xi)\).

\section{Coseismic model, prior, truth, and offline-construction details}
\label{app:coseismic_details}

\subsection{Physical model and discretization}

We adopt the two-dimensional weakly symmetric Hellinger--Reissner elasticity formulation and fault treatment of \cite{Puel22}, as used in the heterogeneous-material/fault-slip joint-inversion framework of \cite{puel2024adjoint_joint}. Let \(\Omega\subset\mathbb R^2\) have external Dirichlet and Neumann boundaries \(\Gamma_D\) and \(\Gamma_N\) and an internal fault interface \(\Gamma_F\). With zero body force, the state consists of displacement \(\bm u\), stress \(\bm\sigma\), and rotation \(\bm\varrho=\frac12(\nabla\bm u-(\nabla\bm u)^T)\), satisfying
\begin{equation*}
\begin{alignedat}{3}
\bm A(m)\bm\sigma
&= \nabla\bm u-\bm\varrho,
&\qquad
-\nabla\cdot\bm\sigma
&= \bm 0,
&\qquad& \text{in }\Omega,
\\[2pt]
\bm u
&= \bm 0 \quad \text{on }\Gamma_D,
&\qquad
\bm\sigma\bm n
&= \bm 0 \quad \text{on }\Gamma_N,
\\[2pt]
\left[\!\left[T(\bm n_F)\bm u\right]\!\right]
&= s,
&\qquad
\left[\!\left[\bm u\cdot\bm n_F\right]\!\right]
&= 0,
&\qquad
\left[\!\left[\bm\sigma\bm n_F\right]\!\right]
&= \bm 0
\quad \text{on }\Gamma_F .
\end{alignedat}
\end{equation*}
Here \(\left[\!\left[v\right]\!\right]=v^- - v^+\) denotes the jump across the fault, \(\bm n_F=\bm n^+=-\bm n^-\) is a fixed fault normal, and \(T(\bm n_F)\) extracts the tangential displacement component. Thus \(s\) prescribes tangential slip, while normal displacement and traction remain continuous across the fault.

We fix the Poisson ratio at \(\nu=0.25\) and parameterize the shear modulus by
\[
\bm A(m)\bm\sigma
=
\frac{1}{2\mu(m)}
\left[\bm\sigma-\nu\tr(\bm\sigma)\bm I\right],
\qquad
\mu(m)=\qty{30}{\giga\pascal}\,[1.1+\tanh(m)],
\]
which is the \(d=2\) specialization of the isotropic compliance law used in \cite{Puel_2024}.

The domain is a \(500\times250\) km vertical cross-section with a dipping fault. Horizontal and vertical surface displacements are observed at 20 GPS stations, giving \(n_{\mathrm{obs}}=40\). Following \cite{Puel22,Puel_2024}, the mixed state is discretized with the second-order stable \(\mathrm{BDM}_2\times\mathrm{DG}_1\times\mathrm{DG}_1\) finite-element triplet, while the medium and fault-slip fields use continuous piecewise-linear Lagrange elements. The resulting state dimension is \(145083\), and the medium and source parameter dimensions are \(d_m=3101\) and \(d_s=29\), respectively. We use \(r_m=r_s=12\) and \(r_y=17\). The observation-noise standard deviation is \(0.015\).

After discretization, let \(\bvecm\) and \(\bvecs\) denote the medium and slip coefficient vectors. The mixed state equation has the form \(\mathbf K(\bvecm)\bm x=\mathbf S\bvecs\), where \(\bm x=(\bm\sigma,\bm u,\bm\varrho)\) is the discrete mixed state and \(\mathbf S\) is the fixed linear fault-slip injection operator. Hence
\[
\Fwhite(\bvecm,\bvecs)
=
\Signoise^{-1/2}\mathcal B\mathbf K(\bvecm)^{-1}\mathbf S\bvecs
=
\G(\bvecm)\bvecs,
\]
so the forward map is nonlinear in the medium and linear in the fault slip. 

\subsection{Priors, reduced spaces, and synthetic data}

The medium prior uses an anisotropic BiLaplacian precision with \((\gamma_m,\delta_m)=(10,8\times10^{-2})\), Robin boundary terms, and nominal correlation lengths \(11.2\) in the transverse/far-field direction and \(89.4\) along the fault. Because this prior is Gaussian, its precision is also the canonical medium Poincar\'e metric matrix \(\Rm\). The slip prior uses a BiLaplacian precision with \((\gamma_s,\delta_s)=(1.8,2\times10^{-3})\), Robin boundary terms, and nominal correlation length \(30\); this precision is \(\Rs\).

The common block reduced spaces and the rank-selection rule used in the three-model ablation are defined in \Cref{sec:coseismic_protocol}. Here we record their shared PDE construction cost. The expectations defining \(\Cy\) and \(\Hss(\Py)\) use the same \(200\) independent medium-prior samples. For each sample, all \(d_s=29\) source directions are evaluated once and reused to assemble both matrices, for \(200d_s\) forward right-hand-side solves. After selecting the rank-\(r_s=12\) source space, \(\Hmmaff(\Py,\Ps)\) is estimated with \(200\) independent medium-prior samples. For each such sample, the zero-mean specialization requires \(r_s=12\) forward solves and \(r_y=17\) adjoint solves, with all right-hand sides reusing the same PDE-operator factorization. Because the source prior is Gaussian, this affine medium diagnostic matrix is the medium criterion used when the affine restriction is exact. This reduced-space construction is performed once and shared by Matrix-loss SPPL, Probe-loss SPPL, and DIPNet; hence its cost is common to all pairwise surrogate comparisons, while remaining part of the reported end-to-end offline cost.

Noise-free synthetic observations are generated by a forward solve evaluated at the prescribed medium and slip truths, rather than at the MAP. The likelihood nevertheless uses
the prescribed nonzero Gaussian observation-noise covariance. We denote the resulting
nominal whitened datum by
\(\yobs_1=\Fwhite(m^\dagger,s^\dagger)\). The truth fields are
\begin{align}
m^\dagger(x,y) &= 0.25\, \sin\!\left( 2\pi(1.2)\frac{x}{500} \right) \cos\!\left( 2\pi(1.5)\frac{y}{300} \right),
\label{eq:coseismic_medium_truth} \\
s^\dagger(y) &= -10\exp\!\left( -\frac{(y+20)^2}{15^2} \right),
\label{eq:coseismic_slip_truth}
\end{align}
where \(x\) and \(y\) are measured in kilometers.
The likelihood uses additive Gaussian noise with covariance \(0.015^2 I_{40}\). Any reported percentage noise level is defined relative to the RMS as \(100\times0.015/\operatorname{rms}(\dobs^\dagger)\%\), rather than relative to the maximum displacement.

\section{Neural-surrogate training and data splits}
\label{app:network_training_details}

\subsection{Matched-cost training data}

All three surrogate variants use training budgets
\(N_m\in\{100,400,1600,6400\}\), with the same medium realizations for a
given \(N_m\). The block reduced spaces are constructed once as described in
\Cref{app:coseismic_details} and are shared by all three methods. For
Matrix-loss SPPL, each medium sample provides one coefficient-matrix target
\(\Qproj_j\), assembled using \(r_s=12\) forward right-hand-side solves. For
Probe-loss SPPL and DIPNet, each medium sample \(\bvecm_j\) is paired with
\(r_s=12\) independent source probes
\(\bvecs_{jk}\stackrel{\mathrm{iid}}{\sim}\priors\) and their projected
outputs. Exactly the same realized probe--output pairs are used by the two
probe-trained methods. Thus all three methods use the same number of PDE
right-hand-side solves per medium realization.

All methods use the same 200 validation and 400 test medium realizations. For
each validation or test medium realization, the probe-trained methods use 12
independently sampled source probes, shared between Probe-loss SPPL and DIPNet,
giving 2400 validation and 4800 test pairs. These unscaled probe data are used for the nominal Probe-loss SPPL and for the
\(c_{\mathrm{train}}=1\) DIPNet in the matched-cost ablation.

\subsection{Probe-loss SPPL and its relation to Matrix-loss SPPL}
\label{app:probe_loss_population}

For each medium sample \(\bvecm_j\), let
\(\bvecs_{jk}\stackrel{\mathrm{iid}}{\sim}\priors\),
\(k=1,\ldots,r_s\), denote the source probes and define
\(
\balpha_{jk}=\Phis^*(\bvecs_{jk}-\bars),
\bm z_{jk}=\Phiy^T\Fwhite(\bvecm_j,\bvecs_{jk}).
\)
Probe-loss SPPL minimizes
\begin{equation}
\label{eq:empirical_probe_loss}
\widehat{\mathcal L}_{\mathrm{probe}}(\bm w)
=
\frac{1}{N_m r_s}
\sum_{j=1}^{N_m}\sum_{k=1}^{r_s}
\left\|
\qsurr(\bmu_j;\bm w)+\Qsurr(\bmu_j;\bm w)\balpha_{jk}
-
\bm z_{jk}
\right\|^2.
\end{equation}
In the zero-mean coseismic setting, \(\bars=0\) and \(\qsurr\equiv0\).

Under the standing source-moment assumptions, for fixed \(\bvecm\), write
\[
e_s=(I-\Ps)(\bvecs-\bars).
\]
Then
\[
\Phiy^T\Fwhite(\bvecm,\bvecs)
=
\qproj(\bvecm)+\Qproj(\bvecm)\balpha+\Phiy^T\Gm e_s.
\]
The source-moment identities in \Cref{lem:projection_source_identities} give
\(\E{e_s}=0\) and \(\E{e_s\balpha^T}=0\). Consequently,
\begin{equation}
\label{eq:probe_matrix_population_identity}
\begin{aligned}
&\Ex{\bvecs\sim\priors}{
\left\|
\qsurr(\bmu;\bm w)+\Qsurr(\bmu;\bm w)\balpha
-\Phiy^T\Fwhite(\bvecm,\bvecs)
\right\|^2}
\\
&\qquad =
\left\|\qsurr(\bmu;\bm w)-\qproj(\bvecm)\right\|^2
+
\left\|\Qsurr(\bmu;\bm w)-\Qproj(\bvecm)\right\|_{\mathrm F}^2
+
c_{\mathrm{src}}(\bvecm),
\end{aligned}
\end{equation}
where
\[
c_{\mathrm{src}}(\bvecm)
\coloneqq
\Ex{\bvecs\sim\priors}{
\left\|
\Phiy^T\Gm(I-\Ps)(\bvecs-\bars)
\right\|^2}
\]
is independent of \(\bm w\). Hence the population Probe-loss and Matrix-loss
objectives have the same minimizer, and differences between their empirical
results reflect finite source-probe sampling.

\subsection{Network architecture, optimization, and replication}

Because \(\bars=0\), Matrix-loss SPPL and Probe-loss SPPL use the same network
\(\Qsurr:\mathbb R^{12}\to\mathbb R^{17\times12}\), with
\(\qsurr\equiv0\). DIPNet uses the generic projected network
\(\mathcal N_{\mathrm{DIPNet}}:\mathbb R^{24}\to\mathbb R^{17}\). All
networks have five hidden layers with 256 neurons per layer and GELU
activations. This gives 318,924 trainable parameters for each SPPL network and
273,937 for DIPNet; depth and hidden width, rather than total parameter count,
are matched.

DIPNet inputs are standardized featurewise, whereas SPPL standardizes only the medium coordinates \(\bmu\), leaving \(\balpha\) unchanged and restoring the coefficient matrix (including its empirical mean) before multiplication by \(\balpha\). This ensures that \(\tilde F_{\mathrm{SPPL}}(\bmu,0)=0\) and \(\tilde F_{\mathrm{SPPL}}(\bmu,c\balpha)=c\tilde F_{\mathrm{SPPL}}(\bmu,\balpha)\) for any network weights. Outputs are centered and scaled by a single global training-set
standard deviation, applied to matrix entries for Matrix-loss SPPL and to
projected output vectors for the probe-trained methods. Training uses Adam with
initial learning rate \(10^{-3}\), ReduceLROnPlateau with factor \(0.5\) and
patience 10, and early-stopping patience 20.

Mini-batches are organized by 50 medium indices. For each selected medium
sample, Matrix-loss SPPL uses the full coefficient-matrix target, whereas each
probe-trained method uses all 12 associated probe pairs. Within each training
seed, all methods use the same ordered sequence of medium-index batches. Results
are reported over three paired seeds, with no best-seed selection; the medium
samples, source probes, and targets are fixed across seeds, while network
initialization and batch ordering vary. Figure~\ref{fig:ood_rmse} reports the
across-seed mean and minimum--maximum envelope; other diagnostics use the
reporting convention stated in their captions.

\subsection{Source-scale DIPNet variants}

For the forward-level source-scale diagnostic in Figure~\ref{fig:ood_rmse}, we train separate DIPNets at three scales: \(c_{\mathrm{train}}=1/10\), \(1\), and \(10\). We repeat this for \(N_m=100\), \(400\), \(1600\), and \(6400\). For a fixed \(N_m\), the same medium
realizations are used across the three training scales, with the associated
source probes and projected outputs multiplied by the corresponding scale
factor. The \(c_{\mathrm{train}}=1\) model is the nominal DIPNet used in the
three-model ablation; the other two models are auxiliary scale-transfer
variants.

For the posterior source-scale diagnostic in Figure~\ref{fig:forward_error_source_scale}(b),
we use the \(N_m=6400\) models from this family and add a Combined DIPNet
trained on the union of the three scale-specific training sets. The Combined
model therefore uses three times the probe data and is not part of the
matched-cost three-model ablation.

\section{Coseismic posterior-evaluation and robustness protocol}
\label{app:coseismic_evaluation_protocol}

This section records the posterior-sampling and evaluation details used for the
coseismic surrogate comparisons in \Cref{sec:coseismic_protocol}.

\subsection{Retained-coordinate sampling, posterior accuracy, and online efficiency}

For the Gaussian-prior coseismic experiments, MCMC samples the reduced-space approximate posterior in \eqref{eq:sppl_reduced_posterior} using a MAP-based
likelihood-informed pCN proposal, denoted LA-pCN, adapted from Cui, Law, and
Marzouk~\cite{cui2016dimension} with the underlying function-space pCN proposal
of Cotter et al.~\cite{cotter2013mcmc}.

The bounds in \Cref{sec:sppl_reduction_theory} control the prior-predictive expectation of
the posterior KL divergence. The numerical values reported for the observed
inverse problems are instead fixed-\(\yobs\) posterior diagnostics based on the fixed-data forward KL divergence \(D_{\mathrm{KL}}(\post\,\|\,\tilde{\pi}_{\mathrm{post}}^{\yobs})\):
they assess posterior fidelity empirically for each realized data set rather than
serving as pointwise certificates from the data-free theory. Using posterior samples
\(\{\btheta_i\}_{i=1}^N\) from the high-fidelity reference chain, we estimate
\begin{equation}
\label{eq:fixed_data_posterior_kl_estimator}
\begin{aligned}
D_{\mathrm{KL}}\left(\post \| \tilde{\pi}_{\mathrm{post}}^{\yobs}\right)
\approx {}&\frac{1}{N} \sum_{i=1}^N
\left(\tilde{\mathcal{J}}^{\yobs}(\btheta_i) - \mathcal{J}^{\yobs}(\btheta_i)\right)\\
&+\log \left(\frac{1}{N} \sum_{i=1}^N \exp \left( \mathcal{J}^{\yobs}(\btheta_i) - \tilde{\mathcal{J}}^{\yobs}(\btheta_i)\right) \right).
\end{aligned}
\end{equation}
Here \(\mathcal J^{\yobs}\) and \(\tilde{\mathcal J}^{\yobs}\) are the
high-fidelity and surrogate negative log-posteriors, respectively. Since the
two posteriors use the same prior, their difference is determined by the
corresponding high-fidelity and surrogate likelihood potentials. In practice,
the estimator therefore reuses the high-fidelity posterior samples and
evaluates both forward models at those same samples.
For surrogate runs, the multivariate potential scale reduction factor
(MPSRF)~\cite{brooks1998general} over retained coordinates was at most
\(\num{1.001}\).

In addition to posterior fidelity and convergence, we assess online efficiency
using forward-evaluation cost, wall-clock cost per MCMC step, and integrated
autocorrelation time (IACT). These diagnostics quantify the acceleration
obtained by sampling the reduced surrogate posterior rather than the
high-fidelity PDE posterior.

\subsection{Source-scale robustness}

For the five data scales \(c\) defined in \Cref{sec:coseismic_protocol}, we run a separate high-fidelity PDE-based MCMC chain targeting \(\pi_{\mathrm{post}}^{\yobs_c}\). Let
\(\{\btheta_i^{(c)}\}_{i=1}^N\) denote samples from the corresponding reference chain. For each trained surrogate, we evaluate the high-fidelity and surrogate negative log-posteriors at these same samples and apply
\eqref{eq:fixed_data_posterior_kl_estimator}, with \(\yobs\) replaced by
\(\yobs_c\), to estimate
\[
D_{\mathrm{KL}}\!\left(
\pi_{\mathrm{post}}^{\yobs_c}
\,\|\,
\tilde{\pi}_{\mathrm{post}}^{\yobs_c}
\right).
\]
Thus every point in Figure~\ref{fig:forward_error_source_scale}(b) uses samples from the high-fidelity posterior corresponding to that test datum; the surrogate is used only to evaluate the approximate likelihood/posterior potential at those reference samples.

Figure~\ref{fig:forward_error_source_scale}(b) applies this posterior procedure to the
\(N_m=6400\) DIPNet source-scale family defined in
\Cref{app:network_training_details}. Matrix-loss SPPL, the three separately
trained DIPNets, and the Combined DIPNet are evaluated at every
test scale \(c\). Figure~\ref{fig:ood_rmse} is broader at the forward-model
level: it evaluates the three scale-specific DIPNets for every
\(N_m\in\{100,400,1600,6400\}\), together with the two SPPL references.

\section*{Acknowledgments}

The authors acknowledge the Texas Advanced Computing Center (TACC) at The University of Texas at Austin for providing computational resources that contributed to the research reported in this paper. Computations were performed on the Stampede3 system, which is supported by the U.S. National Science Foundation under Award No.~2320757.

\bibliographystyle{siamplain}
\bibliography{references}
\end{document}